\begin{document}

\title{Uniform convergence of optimal order of a local discontinuous Galerkin method on a Shishkin mesh under a balanced norm 
}

\titlerunning{LDG method on  Shishkin  mesh}        
 
\author{Xiaoqi Ma   \and
Jin Zhang         \and
        Wenchao Zheng 
}


\institute{Corresponding author:  Jin Zhang \at
              School of Mathematics and Statistics, Shandong Normal University,
Jinan 250014, China\\
              \email{jinzhangalex@sdnu.edu.cn}           
           \and
           Xiaoqi Ma \at
           School of Mathematics and Statistics, Shandong Normal University,
Jinan 250014, China\\
            \email{xiaoqiMa@hotmail.com}  
}

\date{Received: date / Accepted: date}

\maketitle

\begin{abstract}
This article investigates a local discontinuous Galerkin (LDG) method for one-dimensional and two-dimensional singularly perturbed reaction-diffusion problems on a Shishkin mesh. 
During this process, due to the inability of the energy norm to fully capture the behavior of the boundary layers appearing in the solutions, a balanced norm is introduced.
 By designing novel numerical fluxes and constructing special interpolations, optimal convergences under the balanced norm are achieved in both 1D and 2D cases. Numerical experiments support the main theoretical conclusions.
\keywords{Singular perturbation \and Local discontinuous Galerkin \and Shishkin mesh \and Balanced norm }
\subclass{ 65N30  \and 65N50}
\end{abstract}
%
%
%
\section{Introduction}
Singular perturbation problems are popularly used in various fields, such as fluid mechanics, energy development and electronic science \cite{R2012-finite,m2021-local,z2013-point}. 
Their typical feature is that one or more layers usually appear in their solutions.
To fully resolve the layers and derive the uniform convergence with respect to  perturbation parameters, layer-adapted meshes were introduced in the 1960s \cite{B1969-Towards} and gradually became an active research field \cite{Ma1Zha2:2023-S,miller1996-fitted}.
Due to its simple structure, Shishkin meshes  have attracted the attention of many researchers, see \cite{Xie1Zhu2Zho3:2013-U,Zha1Liu2:2017-S,miller1996-fitted}.


At present, many numerical methods have been well analyzed on  Shishkin meshes, such as the finite difference method, the finite element method and so on \cite{Styn1Styn2:2018-Convection-diffusion,R2008-Robust}. However, these methods cannot accurately calculate the gradient of the solution. 
To solve this problem, the local discontinuous Galerkin (LDG) method \cite{C1998-local} is a good choice.
As a kind of discontinuous Galerkin (DG) method, the LDG method inherits many advantages of the DG method \cite{C2012-Discontinuous}, such as high-order accuracy, good stability  and flexibility on $hp$ adaptivity. These advantages and its ease of calculating gradients make the LDG method more suitable for handling those solutions with  layers.
Therefore, the LDG method has been applied to different singularly perturbed problems in recent years \cite{c2022-balanced,z2013-point}. 

In this paper, we mainly discuss singularly perturbed reaction-diffusion equations. For these reaction-diffusion problems, the authors in \cite{m2021-convergence} have analyzed uniform convergence  under an energy norm. But the energy norm is too weak to fully capture the behavior of the layers appearing in the solutions. For the sake of addressing this issue, researchers have introduced a new norm--balanced norm, which is more suitable than the energy norm for layers; see \cite{R2012-finite,R2015-convergence}.
 In \cite{c2022-balanced}, Cheng et al. derived a uniform convergence of optimal order under the assumption that the smooth component of the solution belongs to the finite element space.
But the assumption is too strong and generally cannot be established.

Our goal is to design a LDG method to avoid the aforementioned drawbacks.
For this purpose,  we design a special numerical flux, and it is the key to our LDG method.  Furthermore, we also introduce a new projection, which consists of a Gau{\ss}-Radau projection and a (weighted) local $L^2$ projection. Based on the new numerical fluxes and projection, it is proven  that the LDG solution is convergent with optimal order under the balanced norm. Finally, the LDG method and its convergence analysis are extended from one-dimensional to two-dimensional.

The framework of the article is arranged as follows. First, we obtain the uniform convergence of the LDG method for a one-dimensional singularly perturbed problem in Section 2. In this progress, a LDG method is introduced and a specific numerical flux is designed.  Then we define a Shishkin mesh and present some projections. Finally, the optimal convergence order is derived in the balanced norm. 
In Section 3, we study the LDG method for a singular perturbation problem in 2D, and the content distribution is the same as in Section 2. 
Finally, a numerical experiment is presented to verify the theoretical results. In the paper, $C$ is a general positive constant which is independent of $\epsilon$ and $N$. Here $\epsilon$ is the perturbation parameter and $N$ is the mesh parameter. Furthermore, let $k\ge 1$ be a fixed positive integer.


\section{LDG method for 1D case}
We study the following singularly perturbed two-point boundary value problem:
\begin{equation}\label{eq1.1}
-\epsilon u''+b(x)u=f(x),\quad \text{$x\in \Omega=(0,1)$},\quad u(0)=u(1)=0,
\end{equation}
in which $0< \epsilon\ll 1$ is the perturbation parameter, $b(x)\geqslant \beta^2 > 0$  and $f(x)$ are sufficiently smooth functions on $\overline{\Omega}$. Here $\beta$ is some positive constant. The solution of 
\eqref{eq1.1} is typically characterized by boundary layers at $x=0$ and $x=1$ of width $O(\sqrt{\epsilon}\ln(1/\epsilon))$.

\subsection{LDG method}
First, we divide $\Omega=(0,1)$ into $0=x_0<x_1<\ldots<x_{N-1}<x_N=1.$
Define  $[x_{i-1}, x_{i}]$ as $I_i$ for $i=1, 2, \ldots,N$, and the step size of the interval $I_i$ as $h_i = x_{i} -x_{i-1}$. Then the discontinuous Sobolev space $V_N$ is denoted as $$V_{N}=\{v\in L^2(\Omega):\text{$v|_{I_i}\in \mathbb{P}_k(I_i), 1\le i\le N$ } \},$$ where $\mathbb{P}_k(I_i)$ is the space of polynomials of degree at most $k\ge 1$ on $I_i$.
For $v\in V_N$, we introduce $v^{\pm}_i=\lim\limits_{x\to x^{\pm}_i}v(x)$, and define a jump as: $[[v]]_i=v^{-}_i-v^{+}_i$ for $i=1,2,\ldots,N-1$, $[[v]]_0=-v^{+}_0$ and $[[v]]_N=v^{-}_N$.

Next, the LDG method is introduced for the problem \eqref{eq1.1}. Firstly, let us rewrite it as the following first-order system,
\begin{equation*}
\begin{aligned}
\epsilon^{-1}q=u',\quad \text{in $\Omega$}, \\
-q'+b(x)u=f(x),\quad \text{in $\Omega$}.
\end{aligned}
\end{equation*}
Let $\chi=(r,v)\in V_N \times V_N$ be any test function. Assume that $\left< \cdot,\cdot\right>_{I_i}$ is the inner product in $L^2(I_i)$.
Find $W=(Q, U)\in V_N \times V_N$ such that 
\begin{align}
&\epsilon^{-1}\left<Q,r\right>_{I_i}+\left<U,r'\right>_{I_i}-\widehat{U}_i r^{-}_i+\widehat{U}_{i-1}r^{+}_{i-1}=0,\label{method--1}\\
&\left<Q,v'\right>_{I_i}+\left<bU,v\right>_{I_i}-\widehat{Q}_iv^{-}_i+\widehat{Q}_{i-1}v^{+}_{i-1}=\left<f,v\right>_{I_i}\label{method--2},
\end{align}
 where the numerical fluxes $\widehat{Q}$ and $\widehat{U}$ are defined by
\begin{equation}\label{A-1}
\begin{aligned}
&\widehat{Q}_i=
\left\{
\begin{aligned}
& Q^{+}_0+\lambda_0U^{+}_0, \quad &&\text{$i=0$},\\
& Q^{+}_i, \quad &&\text{$i=1,2,\ldots,N-1$},\\
& Q^{-}_N-\lambda_NU^{-}_N, \quad &&\text{$i=N$},
\end{aligned}
\right. \\
&\widehat{U}_i=
\left\{
\begin{aligned}
& 0, \quad &&\text{$i=0,N$},\\
&U^{-}_{\frac{3}{4}N}-\lambda_{q}[[Q]]_{\frac{3}{4}N}, \quad&& i=\frac{3}{4}N,\\
& U^{-}_i, \quad &&\text{$i=1,2,\ldots,\frac{3}{4}N-1,\frac{3}{4}N+1,\ldots,N-1$},\\
\end{aligned}
\right.\\
\end{aligned}
\end{equation}
where we choose $\lambda_0=\lambda_N=\epsilon^{\frac{1}{2}}$ and $\lambda_q=\epsilon^{-\frac{1}{2}}$.
\begin{remark}
We design a new numerical flux $\widehat{U}_i$, which is the key to convergence analysis of the LDG method in the balanced norm. Unlike \cite{c2022-balanced}, an additional term involved with  $[[Q]]_{\frac{3}{4}N}$ appears in $\widehat{U}_{\frac{3}{4}N}$. By utilizing this new numerical flux, the  optimal convergence order can be derived without the impossible condition in \cite{c2022-balanced}.
\end{remark}
\begin{remark}
The selection of numerical flux \eqref{A-1} in this paper is a variant of the LDG method, see \cite[Section 4.4.2]{Di1Ern2:2012-M} and \cite{Ngu1Per2:2009-motified} for more details. Due to the numerical flux $\widehat{U}_{\frac{3}{4}N}$ depends on $[[Q]]_{\frac{3}{4}N}$, we cannot use \eqref{method--1} to solve for $Q$ in terms of $U$ element-by-element on $I_{\frac{3}{4}N}\cup I_{\frac{3}{4}N+1}$. 
This precludes the local elimination of $Q$ on $I_{\frac{3}{4}N}\cup I_{\frac{3}{4}N+1}$, thereby increasing the computational cost of the approximation method. However, in this case, we can derive an optimal convergence order under the balanced norm.
\end{remark}
Define $\left<w,v\right>=\sum\limits_{i=1}^N\left<w,v\right>_{I_i}$ and 
express the scheme \eqref{method--1} and \eqref{method--2} by a compact form: Find  $W=(Q,U)\in V_N\times V_N$ such that
\begin{equation*}
B(W;\chi)=\left<f,v\right>,\quad \forall \chi=(r,v)\in V_N\times V_N,
\end{equation*}
where
\begin{equation}\label{method:2}
\begin{aligned}
B(W;\chi)=&\epsilon^{-1}\left<Q,r\right>+\left<b U,v\right>+\left<U,r'\right>-\sum_{i=1}^{N-1}U^{-}_i[[r]]_i+\left<Q,v'\right>-\sum_{i=0}^{N-1}Q^{+}_i[[v]]_i\\
&-(Q v)_N^{-}+\sum_{i\in \{0,N\}}\lambda_i [[u]]_i[[v]]_i+\lambda_q[[Q]]_{\frac{3}{4}N}[[r]]_{\frac{3}{4}N}.
\end{aligned}
\end{equation}
\subsection{Regularity of the solution}
\begin{lemma}\label{lem1}
(Referene \cite[Lemma 6.2]{miller1996-fitted}) For $i=0,1,2,3,\cdots,k+2$, suppose the solution $u$ of \eqref{eq1.1} allows the decomposition  $u(x)=\bar{u}(x)+u_{\epsilon,1}(x)+u_{\epsilon,2}(x)$ with $x\in \overline{\Omega}$, then
\begin{equation*}\label{solution:1}
|\frac{d^i \bar{u}(x)}{d x^i}|\le C,\quad |\frac{d^i u_{\epsilon,1}(x)}{d x^i}|\le C\epsilon^{-\frac{i}{2}}e^{-\frac{\beta x}{\sqrt{\epsilon}}},\quad |\frac{d^i u_{\epsilon,2}(x)}{d x^i}|\le C\epsilon^{-\frac{i}{2}}e^{-\frac{\beta (1-x)}{\sqrt{\epsilon}}}.
\end{equation*}
 Furthermore, for $i=0,1,2,\cdots,k+1$, we have $q=\bar{q}+q_{\epsilon,1}+q_{\epsilon,2}=\epsilon \frac{d\bar{u}}{dx}+\epsilon \frac{d u_{\epsilon,1}}{dx}+\epsilon \frac{d u_{\epsilon,2}}{dx}$ and
\begin{equation*}\label{solution:2}
|\frac{d^i \bar{q}(x)}{d x^i}|\le C\epsilon,\quad |\frac{d^i q_{\epsilon,1}(x)}{d x^i}|\le C\epsilon^{-\frac{i-1}{2}}e^{-\frac{\beta x}{\sqrt{\epsilon}}},\quad |\frac{d^i q_{\epsilon,2}(x)}{d x^i}|\le C\epsilon^{-\frac{i-1}{2}}e^{-\frac{\beta (1-x)}{\sqrt{\epsilon}}}.
\end{equation*}
\end{lemma}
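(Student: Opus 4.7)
The plan is to construct the decomposition via a truncated asymptotic expansion and then control each piece with maximum-principle arguments against exponential barriers. First I would define the smooth component by $\bar{u}(x) = \sum_{j=0}^{M}\epsilon^j \bar{u}_j(x)$ for $M$ sufficiently large (depending on $k$), where $\bar{u}_0$ solves the reduced problem $b\bar{u}_0 = f$ and each subsequent term solves $b\bar{u}_j = \bar{u}_{j-1}''$ with no boundary data imposed. Because $b \geq \beta^2 > 0$ and $f$ are smooth on $\overline{\Omega}$, each $\bar{u}_j$ is smooth with $\epsilon$-independent $C^{k+2}$ bounds, so $|\bar{u}^{(i)}(x)| \leq C$ follows immediately.

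Next, I would introduce the boundary-layer correctors $u_{\epsilon,1}$ and $u_{\epsilon,2}$ so that $u = \bar{u} + u_{\epsilon,1} + u_{\epsilon,2}$ exactly. Concretely, let $u_{\epsilon,1}$ solve $-\epsilon v'' + bv = 0$ on $(0,1)$ with $v(0) = -\bar{u}(0)$ and $v(1) = 0$, and symmetrically for $u_{\epsilon,2}$; any truncation residual of size $O(\epsilon^{M+1})$ can be absorbed into the smooth part by enlarging $M$. The pointwise bound $|u_{\epsilon,1}(x)| \leq Ce^{-\beta x/\sqrt{\epsilon}}$ then follows from the comparison principle with barrier $\phi_1(x) = Ce^{-\beta x/\sqrt{\epsilon}}$: a direct computation using $b \geq \beta^2$ shows $(-\epsilon \partial_x^2 + b)\phi_1 \geq 0$, and $\phi_1$ dominates $|u_{\epsilon,1}|$ at both endpoints.

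For the derivative estimates I would bootstrap from the equation itself. Rearranging gives $u_{\epsilon,1}''(x) = \epsilon^{-1} b(x) u_{\epsilon,1}(x)$, so the pointwise bound on $u_{\epsilon,1}$ immediately yields $|u_{\epsilon,1}''| \leq C\epsilon^{-1}e^{-\beta x/\sqrt{\epsilon}}$. Differentiating the ODE $i-2$ further times and re-applying the barrier argument to each successive derivative produces the claimed prefactor $\epsilon^{-i/2}$ for $u_{\epsilon,1}^{(i)}$, with analogous bounds for $u_{\epsilon,2}$ by symmetry. The decomposition of $q = \epsilon u'$ then follows by direct differentiation of $u = \bar{u} + u_{\epsilon,1} + u_{\epsilon,2}$, and the bounds on $\bar{q}^{(i)}$, $q_{\epsilon,1}^{(i)}$, $q_{\epsilon,2}^{(i)}$ are obtained by multiplying the corresponding $u^{(i+1)}$ bounds by $\epsilon$.

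The main obstacle is the careful bookkeeping of the truncation residual: the expansion depth $M$ must be chosen large enough that $\epsilon^{M+1}$ dominates the $\epsilon^{-i/2}$ amplification incurred at each differentiation step, and one must verify compatibility of the correctors' boundary data as the bootstrap progresses. These details are classical for one-dimensional reaction-diffusion problems and are worked out in full in Miller, O'Riordan and Shishkin; in practice the authors simply invoke Lemma 6.2 of that reference rather than reproducing the argument.
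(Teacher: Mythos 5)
The paper gives no proof of this lemma at all; it is quoted verbatim from \cite[Lemma~6.2]{miller1996-fitted}, and your sketch is a faithful outline of the classical argument in that reference (reduced-problem expansion for the smooth part, exponential barriers for the layer correctors, absorption of the $O(\epsilon^{M+1})$ truncation residual). One step, however, does not work as written: the bootstrap ``$u_{\epsilon,1}''=\epsilon^{-1}bu_{\epsilon,1}$, then differentiate'' produces only the even-order derivatives from the zeroth-order bound. To differentiate the ODE once more you already need $|u_{\epsilon,1}'|\le C\epsilon^{-1/2}e^{-\beta x/\sqrt{\epsilon}}$, and that first-derivative bound is not a consequence of the equation alone; it requires a separate ingredient, e.g.\ a mean-value/interpolation estimate on subintervals of length $O(\sqrt{\epsilon})$ combining the bounds on $u_{\epsilon,1}$ and $u_{\epsilon,1}''$, or an explicit integral representation of the solution of the layer problem. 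With that ingredient supplied, the induction closes and the $q$-estimates follow by multiplying by $\epsilon$ exactly as you say. So the proposal is the right (and the standard) route, with one concretely identifiable missing step rather than a wrong approach.
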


\subsection{Shishkin mesh}
Now a Shishkin mesh is introduced. Let $N\ge4$ be a multiple of $4$
and the transition point be 
\begin{align*}
\tau_{t}=\frac{\sigma \sqrt{\epsilon}\ln N}{\beta}\le \frac{1}{4},
\end{align*}
where we take $\sigma \ge k+1$. 
Divide  $[\tau_{t}, 1-\tau_{t}]$ into $N/2$  equidistant subintervals and in addition, the $N/4$ intervals are evenly distributed on both  the intervals $[0, \tau_{t}] $ and $[1- \tau_{t}, 1]$.
Therefore, the Shishkin mesh are presented by
\begin{equation}\label{mesh:1}
x_i=
\left\{
\begin{split}
& 4\frac{\sigma \sqrt{\epsilon} }{\beta} t_i\ln N, \quad &&\text{for $i=0, 1, \ldots, \frac{1}{4}N$},\\
&\tau_{t}+2(1-2\tau_{t})(t_i-\frac{1}{4}),  &&\text{for $i=\frac{1}{4}N+1, \ldots, \frac{3}{4}N$},\\
&1-\frac{4\sigma \sqrt{\epsilon} \ln N}{\beta} (1-t_i), \quad &&\text{for $i=\frac{3}{4}N+1, \ldots, N$},
\end{split}
\right.
\end{equation}
in which $t_j\equiv \frac{j}{N}$ for $j=0,1,\ldots,N$.
It is obvious that on the layer-adapted mesh \eqref{mesh:1}, $$C\sqrt{\epsilon}N^{-1}\ln N\le h_j \le CN^{-1}$$
for $1\le j\le N$ hold.


\subsection{Projections}
For the later analytical proof, we design a special projection that is obtained by combining the local (or weighted local) $L^2$ projection and the Gau{\ss}-Radau projection.
\begin{itemize}
\item
\textbf{Local $L^2$ projection $\pi$.} For any $w\in L^2(I_i)$, $\pi w\in \mathbb{P}_k(I_i)$ is denoted as:
\begin{equation*}
\begin{aligned}
\left<\pi w, v\right>_{I_i}=\left<w, v\right>_{I_i}, \quad \forall v\in \mathbb{P}_k(I_i),
\end{aligned}
\end{equation*}
for $I_i=(x_{i-1}, x_i)$, $i=1, 2, \ldots, N$.
\item \textbf{Weighted local $L^{2}$ projection $\pi_{b}$.}
 For any $w \in L^{2}(I_i)$, $\pi_{b}w \in \mathbb{P}_k(I_i)$ such that
\begin{equation*}
\begin{aligned}
\left<b\pi_{b}w, v\right>_{I_i} = \left<bw, v\right>_{I_i}, \quad \forall v\in \mathbb{P}_k(I_i),
\end{aligned}
\end{equation*}
hold for each cell $I_{i} = (x_{i-1}, x_{i})$, $i = 1, 2, \dots, N$, where the weight function $b(x)$ satisfies $b(x)\ge \beta^{2} > 0$ for some constant $\beta$. In the special case of $b = 1$, $\pi_{b}\equiv\pi$.
\item
\textbf{Gau{\ss}-Radau projection $\pi^{\pm}$.} For any $w\in H^1(I_i)$, $\pi^{\pm} w\in \mathbb{P}_k(I_i)$ is defined as:
\begin{equation*}
\begin{aligned}
&\left<\pi^{\pm} w,v\right>_{I_i}=\left<w,v\right>_{I_i},\quad \forall v\in \mathbb{P}_{k-1}(I_i),\\
&(\pi^{+}w)^{+}_{i-1}=w^{+}_{i-1},\quad (\pi^{-}w)^{-}_i=w^{-}_i,
\end{aligned}
\end{equation*}
for $I_i= (x_{i-1}, x_{i})$, $i=1, 2, \ldots, N$.
\end{itemize}
The existence and uniqueness of Gau{\ss}-Radau projection is referred to \cite{A1999-An}. Following \cite[Lemma 5.1]{c2021-local}, we obtain the local stabilities and local approximation properties in Lemma \ref{lem4-111}.
\begin{lemma}\label{lem4-111}
Let $||v||_{I_i}=||v||_{L^2(I_i)}$ and $||v||_{L^{\infty}(I_i)}$ be the usual $L^2$ and $L^{\infty}$ norms on $I_i$, 
\begin{align}
&||\pi w||_{I_i}\le C||w||_{I_i},\nonumber \\
&||\pi_{b} w||_{I_i}\le C||w||_{I_i},\nonumber \\
&||\pi^{-}w||_{I_{i}}\le C\left(||w||_{I_i}+h_i^{1/2}|w_i|\right),\nonumber\\
&||\pi^{+}w||_{I_i}\le C\left(||w||_{I_i}+h_i^{1/2}|w_{i-1}|\right),\nonumber\\
&||\mathbb{R}w||_{L^{\infty}(I_i)}\le C||w||_{L^{\infty}(I_i)},\nonumber\quad \mathbb{R}=\pi, \pi_{b}, \pi^{\pm},\\
&||w-\mathbb{R}w||_{L^{m}(I_i)}\le Ch_i^{k+1}||w^{(k+1)}||_{L^{m}(I_i)},\label{eq3.5}\quad k=2,\infty, \quad \mathbb{R}=\pi, \pi_{b}, \pi^{\pm},\nonumber
\end{align}
where $C > 0$ is a constant independent of $h_{i}$ and $w$.
\end{lemma}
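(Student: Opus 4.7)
The plan is to prove every inequality by pulling back to a fixed reference interval $\hat I=(-1,1)$ via the affine map $F_i:\hat I\to I_i$, and then to exploit equivalence of norms on the finite-dimensional polynomial space there. For $v$ defined on $I_i$ set $\hat v:=v\circ F_i$. Standard scaling gives $\|v\|_{L^m(I_i)}\simeq h_i^{1/m}\|\hat v\|_{L^m(\hat I)}$ and $\|v^{(\ell)}\|_{L^m(I_i)}\simeq h_i^{1/m-\ell}\|\hat v^{(\ell)}\|_{L^m(\hat I)}$, and each of the four operators commutes with pullback in the sense that $(\mathbb{R}w)\circ F_i=\hat{\mathbb{R}}\hat w$. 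For $\pi_b$ the pulled-back weight $\hat b:=b\circ F_i$ still satisfies $\beta^2\le\hat b\le\|b\|_{L^\infty(\Omega)}$ uniformly in $i$.

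For the $L^2$ stability bounds, $\hat\pi$ is the orthogonal $L^2$-projection onto $\mathbb{P}_k(\hat I)$, so $\|\hat\pi\hat w\|_{L^2(\hat I)}\le\|\hat w\|_{L^2(\hat I)}$; scaling back cancels the common factor $h_i^{1/2}$. For $\hat\pi_b$, I would test the defining identity with $v=\hat\pi_b\hat w$; using $\hat b\ge\beta^2$ and Cauchy-Schwarz yields $\beta^2\|\hat\pi_b\hat w\|_{L^2(\hat I)}^2\le\|b\|_{L^\infty(\Omega)}\|\hat w\|_{L^2(\hat I)}\|\hat\pi_b\hat w\|_{L^2(\hat I)}$. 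For $\hat\pi^-$, decompose $\hat\pi^-\hat w=p_0+c\,q_k$, where $p_0\in\mathbb{P}_{k-1}(\hat I)$ solves the $\mathbb{P}_{k-1}$ moment equations and $q_k\in\mathbb{P}_k(\hat I)$ is a fixed polynomial $L^2$-orthogonal to $\mathbb{P}_{k-1}(\hat I)$ with $q_k(1)\ne 0$. Then $c=(\hat w(1)-p_0(1))/q_k(1)$, and an elementary estimate gives $\|\hat\pi^-\hat w\|_{L^2(\hat I)}\le C(\|\hat w\|_{L^2(\hat I)}+|\hat w(1)|)$; scaling back turns $|\hat w(1)|$ into $|w_i|$ and produces the advertised factor $h_i^{1/2}|w_i|$. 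The argument for $\pi^+$ is symmetric.

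All four projections reproduce $\mathbb{P}_k(I_i)$ exactly. Combining this invariance with the $L^2$ stabilities above and the Deny-Lions lemma on $\hat I$ gives $\|\hat w-\hat{\mathbb{R}}\hat w\|_{L^m(\hat I)}\le C|\hat w|_{W^{k+1,m}(\hat I)}$ for $m\in\{2,\infty\}$; for $\hat\pi^-$ the endpoint term $|\hat w(1)|$ is controlled by the trace embedding $W^{k+1,m}(\hat I)\hookrightarrow C^0(\overline{\hat I})$ valid in one dimension. Scaling back then yields the factor $h_i^{k+1}$ in the approximation estimate. Finally, the $L^\infty$ stability of each $\mathbb{R}$ follows by chaining the polynomial inverse inequality $\|\hat v\|_{L^\infty(\hat I)}\le C_k\|\hat v\|_{L^2(\hat I)}$ on $\mathbb{P}_k(\hat I)$ with the above $L^2$ stability applied to $\hat w$, noting that the $L^\infty$ norm is scale invariant so no powers of $h_i$ appear.

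The only subtle point is the weighted projection $\pi_b$, where the coefficient $b$ does not pull back to a constant and the reference operator $\hat\pi_{\hat b}$ therefore changes from cell to cell; what rescues the argument is the uniform sandwich $\beta^2\le\hat b\le\|b\|_{L^\infty(\Omega)}$, which makes $\hat\pi_{\hat b}$ the orthogonal projection in an inner product uniformly equivalent to the standard one on $\mathbb{P}_k(\hat I)$. All constants above therefore depend only on $k$, $\beta$, and $\|b\|_{L^\infty(\Omega)}$, so the same Bramble-Hilbert/scaling chain that works for $\pi$ goes through verbatim for $\pi_b$.
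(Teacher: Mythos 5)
Your proposal is correct, and in fact it supplies more than the paper does: the paper gives no proof of Lemma~\ref{lem4-111} at all, merely asserting that it follows from Lemma~5.1 of the cited reference \cite{c2021-local}. Your reference-element/scaling argument is the standard route such results are proved by, and you handle the genuinely delicate points correctly: the Legendre decomposition $\hat\pi^{-}\hat w=p_0+c\,L_k$ with $c=\hat w(1)-p_0(1)$ simultaneously gives existence, uniqueness and the stability bound with the endpoint term, and your observation that the pulled-back weight $\hat b$ is uniformly sandwiched between $\beta^2$ and $\|b\|_{L^\infty(\Omega)}$ is exactly what keeps the constants for $\pi_b$ independent of the cell, so the Bramble--Hilbert/Deny--Lions chain applies verbatim. (Incidentally, the ``$k=2,\infty$'' in the statement of \eqref{eq3.5} is a typo for $m=2,\infty$, as your proof implicitly assumes.)
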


Now we provide the following projections:
\begin{equation*}
\begin{aligned}
&(P^{-}_N u)|_{I_i}=
\left\{
\begin{aligned}
& \pi^{-}u, \quad &&\text{for $i=1,2,\ldots,N/4,3N/4+1,\ldots,N$},\\
& \pi_{b} u,\quad &&\text{for $i=N/4+1,\ldots ,3N/4$},\\
\end{aligned}
\right. \\
&(P^{+}_N q)|_{I_i}=
\left\{
\begin{aligned}
& \pi^{+}q, \quad &&\text{for $i=2, 3, \cdots, N$},\\
& \pi q,\quad &&\text{for $i=1$}.\\
\end{aligned}
\right. 
\end{aligned}
\end{equation*}

Assume that $||v||^2=\sum\limits_{i=1}^N||v||_{I_i}^2$ and $||v||_{I_i}^2=\left<v,v\right>_{I_i}$.
\begin{lemma}\label{lem2}
Let $\epsilon\le CN^{-1}$ and $\sigma \ge k+1$. Then on the layer-adapted mesh \eqref{mesh:1}, there is
\begin{align}
&\qquad||u-P^{-}_N u||_{[0, x_{N/4}]\cup [x_{3N/4}, 1]}\le C\epsilon^{\frac{1}{4}}(N^{-1}\ln N)^{k+1},\label{mesh:2}\\
&\qquad||q-P^{+}_N q||\le C\epsilon^{\frac{3}{4}}(N^{-1}\ln N)^{k+1},\label{mesh:3}\\
&\qquad||u-P^{-}_N u||_{L^{\infty}(I_{i})}\le CN^{-(k+1)},\quad i=N/4+1, \cdots, 3N/4,\label{mesh:4}\\
&\qquad||q-P^{+}_N q||_{L^{\infty}(I_{i})}\le C\epsilon^{\frac{1}{2}}(N^{-1}\ln N)^{k+1},\quad i=1, \cdots, N/4, 3N/4+1, \cdots, N,\label{mesh:5}\\
&\qquad||q-P^{+}_N q||_{L^{\infty}(I_{i})}\le C(\epsilon N^{-(k+1)}+\epsilon^{\frac{1}{2}}N^{-\sigma}),\quad i=N/4+1, \cdots, 3N/4.\label{mesh:6}
\end{align}
\end{lemma}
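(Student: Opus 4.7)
My plan is to invoke the Shishkin solution decomposition $u=\bar u+u_{\epsilon,1}+u_{\epsilon,2}$ and $q=\bar q+q_{\epsilon,1}+q_{\epsilon,2}$ from Lemma \ref{lem1}, apply the triangle inequality, and then on each Shishkin sub-region bound the projection error of each of the three components separately using the cell-wise stability and approximation estimates of Lemma \ref{lem4-111}. By the left-right symmetry of the mesh and of the layers it is enough to treat the left-boundary fine region $[0,x_{N/4}]$ together with the coarse region $[\tau_t,1-\tau_t]$; the treatment of $[x_{3N/4},1]$ is the mirror image.

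For the $L^2$ bounds \eqref{mesh:2} and \eqref{mesh:3}, I would use $\pi^-$ (resp.\ $\pi^+$ or $\pi$ on $I_1$) on the fine layer cells, where Lemma \ref{lem4-111} gives the cellwise bound $\|w-\mathbb{R}w\|_{I_i}\le C h_i^{k+1}\|w^{(k+1)}\|_{L^2(I_i)}$. The smooth component contributes a term with factor $O(1)$ for $u$ and $O(\epsilon)$ for $q$, which is of higher order. The opposite-layer term (e.g.\ $u_{\epsilon,2}$ on $[0,x_{N/4}]$) is controlled by the prefactor $e^{-3\beta/(4\sqrt\epsilon)}$ and is beyond any algebraic order of $N^{-1}$. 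The dominant contribution is from the matching layer: inserting the derivative bounds $|u_{\epsilon,1}^{(k+1)}|\le C\epsilon^{-(k+1)/2} e^{-\beta x/\sqrt\epsilon}$ and $|q_{\epsilon,1}^{(k+1)}|\le C\epsilon^{-k/2}e^{-\beta x/\sqrt\epsilon}$ together with $h_i=4\sigma\sqrt\epsilon\ln N/(\beta N)$ yields, after squaring, summing over $i=1,\dots,N/4$, and bounding the geometric factor $\sum_i e^{-2\beta x_{i-1}/\sqrt\epsilon}$, the precise powers $\epsilon^{1/4}$ and $\epsilon^{3/4}$ times $(N^{-1}\ln N)^{k+1}$. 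For \eqref{mesh:3} I must additionally handle the coarse region $[\tau_t,1-\tau_t]$: there $h_i\le CN^{-1}$ and $\|q_{\epsilon,1}\|_{L^\infty(I_i)}\le C\epsilon^{1/2}e^{-\beta x_{i-1}/\sqrt\epsilon}\le C\epsilon^{1/2}N^{-\sigma}$ since $e^{-\beta\tau_t/\sqrt\epsilon}=N^{-\sigma}$, so combining the $L^\infty$ stability of $\pi^+$ with $\sigma\ge k+1$ renders this contribution subordinate.

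For the $L^\infty$ estimates \eqref{mesh:4}--\eqref{mesh:6}, I would apply the cellwise $L^\infty$ approximation $\|w-\mathbb{R}w\|_{L^\infty(I_i)}\le C h_i^{k+1}\|w^{(k+1)}\|_{L^\infty(I_i)}$ from Lemma \ref{lem4-111}. For \eqref{mesh:4}, on a coarse cell $P_N^- u=\pi_b u$, $h_i\le CN^{-1}$, and the smooth part yields $CN^{-(k+1)}$; the two layer parts are bounded by $L^\infty$-stability of $\pi_b$ and their smallness $\le CN^{-\sigma}\le CN^{-(k+1)}$ on the coarse region. For \eqref{mesh:5}, on a fine cell $h_i\le C\sqrt\epsilon N^{-1}\ln N$ together with $\|q_{\epsilon,1}^{(k+1)}\|_{L^\infty}\le C\epsilon^{-k/2}$ gives the advertised $\epsilon^{1/2}(N^{-1}\ln N)^{k+1}$ directly, while the other two components contribute smaller terms. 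For \eqref{mesh:6}, on a coarse cell the smooth part produces the $\epsilon N^{-(k+1)}$ term via $|\bar q^{(k+1)}|\le C\epsilon$, and each layer part is estimated by $L^\infty$-stability of $\pi^+$ as $\le C\|q_{\epsilon,1}\|_{L^\infty(I_i)}\le C\epsilon^{1/2}N^{-\sigma}$, giving the second term.

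The main obstacle I expect is the fine-mesh $L^2$ bookkeeping in \eqref{mesh:2} and \eqref{mesh:3}: one has to balance precisely the blow-up $\epsilon^{-(k+1)/2}$ or $\epsilon^{-k/2}$ coming from the high derivatives of the layer functions against the gain $h_i^{k+3/2}\sim(\sqrt\epsilon N^{-1}\ln N)^{k+3/2}$ from the fine mesh width, in such a way that the surviving $\sqrt\epsilon$-powers consolidate to exactly $\epsilon^{1/4}$ and $\epsilon^{3/4}$ and the geometric sum $\sum e^{-2\beta x_{i-1}/\sqrt\epsilon}$ is correctly absorbed; miscounting a single factor of $\sqrt\epsilon$ or $\ln N$ would produce a suboptimal result. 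The remaining estimates are essentially routine once this balancing is carried out.
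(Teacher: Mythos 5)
Your overall strategy---decompose $u$ and $q$ via Lemma \ref{lem1}, apply the triangle inequality, and estimate each component region by region using the cellwise stability and approximation properties of Lemma \ref{lem4-111}---is exactly what the paper intends: its own ``proof'' of Lemma \ref{lem2} is a one-line appeal to the arguments of \cite{c2022-balanced} and Lemma \ref{lem4-111}, so your outline is essentially an expansion of what the paper leaves implicit. Your fine-region bookkeeping for \eqref{mesh:2} and \eqref{mesh:3} (balancing $h_i^{k+3/2}\epsilon^{-(k+1)/2}$, resp.\ $h_i^{k+3/2}\epsilon^{-k/2}$, against the geometric sum $\sum_i e^{-2\beta x_{i-1}/\sqrt{\epsilon}}\le CN/\ln N$) and your treatment of the $L^{\infty}$ bounds \eqref{mesh:4}--\eqref{mesh:6} are set up correctly.

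There is, however, a genuine gap in your handling of the coarse region in \eqref{mesh:3}. Bounding the layer contribution there by $L^{\infty}$-stability, $\|q_{\epsilon,1}-\pi^{+}q_{\epsilon,1}\|_{L^{\infty}(I_i)}\le C\|q_{\epsilon,1}\|_{L^{\infty}(I_i)}\le C\epsilon^{1/2}N^{-\sigma}$, and then passing to $L^{2}$ over a region of measure $O(1)$ yields only $C\epsilon^{1/2}N^{-\sigma}$. With $\sigma=k+1$ this is $C\epsilon^{1/2}N^{-(k+1)}$, which is \emph{not} dominated by $C\epsilon^{3/4}(N^{-1}\ln N)^{k+1}$: the quotient is $\epsilon^{-1/4}(\ln N)^{-(k+1)}$, which blows up as $\epsilon\to 0$ for fixed $N$. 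The condition $\sigma\ge k+1$ controls powers of $N$, not powers of $\epsilon$, so it cannot ``render this contribution subordinate.'' The missing factor $\epsilon^{1/4}$ must come from integrating the exponential tail, $\|e^{-\beta x/\sqrt{\epsilon}}\|_{L^{2}([\tau_t,1])}\le C\epsilon^{1/4}N^{-\sigma}$, i.e.\ you need the $L^{2}$-stability $\|\pi^{+}w\|_{I_i}\le C\left(\|w\|_{I_i}+h_i^{1/2}|w_{i-1}|\right)$ from Lemma \ref{lem4-111} rather than $L^{\infty}$-stability times measure. Even then the point-value term gives $\sum_i h_i|q_{\epsilon,1}(x_{i-1})|^{2}\le C\epsilon\left(\sqrt{\epsilon}+N^{-1}\right)N^{-2\sigma}$, whose square root contains $C\epsilon^{1/2}N^{-1/2-\sigma}$; absorbing this into $C\epsilon^{3/4}(N^{-1}\ln N)^{k+1}$ requires a separate argument (or a larger $\sigma$, or an additional additive term in the statement, as appears in the two-dimensional analogue \eqref{solution2:3}). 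This is precisely the kind of $\sqrt{\epsilon}$-miscount you warn about for the fine region, but it actually bites in your coarse-region step.
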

\begin{proof}
According to the arguments in \cite{c2022-balanced} and Lemma \ref{lem4-111}, this lemma can be derived without any difficulties.
\end{proof}

\subsection{Convergence analysis}
Recall that ${w}=(q,u)$ is the solution of eqref{eq1.1}.
According to \eqref{method:2}, we define a natural norm by
\begin{equation}\label{norm:1}
|||w|||^2_{E}=B(w;w)= \epsilon^{-1}||q||^2+||b(x)^{\frac{1}{2}}u||^2+\sum_{j\in\{0,N\}}\lambda_j[[u]]_j^2+\lambda_q[[q]]^2_{\frac{3}{4}N}.
\end{equation} 
Thena more powerful balanced norm is introduced, which is denoted as
\begin{equation}\label{norm:2}
|||w|||^2_{B}= \epsilon^{-3/2}||q||^{2}+||b(x)^{\frac{1}{2}}u||^{2}+\sum_{j\in\{0, N\}}[[u]]_j^{2}+\epsilon^{-1}[[q]]^{2}_{\frac{3}{4}N}.
\end{equation}

\begin{theorem}\label{theorem1}
Let $\epsilon\le CN^{-1}$ and $\sigma \ge k+1$ on the mesh \eqref{mesh:1}. Suppose that ${W}=(Q, U)\in V_N \times V_N$ is the solution of \eqref{method--1} and \eqref{method--2}. And suppose that ${w}=(q,u)$ is the solution of \eqref{eq1.1} meeting Lemma \ref{lem1}. Then define ${e=w-W}$ and 
\begin{align*}
&|||e|||_{E}\le  C\left(\epsilon^{\frac{1}{4}}(N^{-1}\ln N)^{k+1}+\epsilon^{\frac{1}{2}}N^{-k}+N^{-(k+1)}\right),\\
&|||e|||_{B}\le C\left((N^{-1}\ln N)^{k+1}+\epsilon^{\frac{1}{4}}N^{-k}\right),
\end{align*}
where $C>0$ is a constant.
\end{theorem}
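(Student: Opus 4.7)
The plan is to follow the standard LDG error decomposition. Introduce the interpolant $\Pi w = (P_N^+ q, P_N^- u)$ and write $e = \eta - \xi$ with $\eta = (\eta_q,\eta_u) = w - \Pi w$ and $\xi = (\xi_q,\xi_u) = \Pi w - W \in V_N\times V_N$. A triangle inequality separates the error into a projection part $|||\eta|||_*$, bounded directly by Lemma \ref{lem2}, and a discrete part $|||\xi|||_*$, controlled via the Galerkin orthogonality $B(\xi;\chi) = B(\eta;\chi)$ that follows from inserting the true solution into \eqref{method--1}--\eqref{method--2}. The task reduces to estimating $|||\xi|||_E$ and $|||\xi|||_B$.

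For the energy norm I would take $\chi = \xi$, so that $|||\xi|||_E^2 = B(\eta;\xi)$, and then expand the right-hand side using \eqref{method:2}. Most terms vanish thanks to the tailored projections: on the boundary-layer cells $P_N^- = \pi^-$ gives $(\eta_u)_i^- = 0$ and $\langle \eta_u, v'\rangle_{I_i} = 0$ for $v'\in \mathbb{P}_{k-1}$; on the central coarse region $P_N^- = \pi_b$ kills the zeroth-order term $\langle b\eta_u, v\rangle_{I_i}$; and $\pi^+$ annihilates $(\eta_q)_i^+$ for $i\ge 1$ and the local integrals $\langle \eta_q, v'\rangle$ for $i\ge 2$. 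What survives is the integral $\langle \eta_u, \xi_q'\rangle$ on the central region, the interior-interface terms $(\eta_u)_i^-[[\xi_q]]_i$ there, one boundary residual at $x_{3N/4}$, the two penalty terms at $\{0,N\}$ and at $3N/4$, plus a low-order contribution on $I_1$ where $P_N^+ = \pi$ rather than $\pi^+$. Each is bounded by Cauchy--Schwarz and Young's inequality, and Lemma \ref{lem2} together with the scalings $\lambda_0 = \lambda_N = \epsilon^{1/2}$, $\lambda_q = \epsilon^{-1/2}$ then produces the announced energy-norm bound. The new penalty at $x_{3N/4}$ is needed precisely to absorb the $(\eta_q)_{3N/4}^+[[\xi_u]]_{3N/4}$-type contribution that $\pi^+$ alone cannot eliminate.

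The balanced-norm estimate is more delicate, because the naive inequality $|||\xi|||_B \le \epsilon^{-1/4}|||\xi|||_E$ would multiply the $N^{-(k+1)}$ piece of the energy bound by $\epsilon^{-1/4}$ and yield the wrong rate. My plan is to treat the two new $q$-contributions $\epsilon^{-3/2}\|\xi_q\|^2$ and $\epsilon^{-1}[[\xi_q]]_{3N/4}^2$ directly from \eqref{method--1}, which, with $r=\xi_q|_{I_i}$, serves as a local representation formula for $\xi_q$ in terms of $\xi_u$ and the flux residuals. Using it together with the inverse inequality on $\mathbb{P}_k$ yields an element-wise estimate of the form $\|\xi_q\|_{I_i} \le C\epsilon h_i^{-1}\|\xi_u\|_{I_i}$ modulo controlled boundary residuals. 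Summing and exploiting the Shishkin sizes $h_i \ge C\sqrt{\epsilon}N^{-1}\ln N$ on the fine cells and $h_i \sim N^{-1}$ on the coarse ones supplies the extra factor of $\epsilon^{1/2}$ that promotes $\epsilon^{-1}\|\xi_q\|^2$ to $\epsilon^{-3/2}\|\xi_q\|^2$ without degradation of the rate. The jump $[[\xi_q]]_{3N/4}$ is absorbed by the penalty of weight $\lambda_q = \epsilon^{-1/2}$, tailor-made so that $\epsilon^{-1}[[\xi_q]]_{3N/4}^2$ is no worse than its energy counterpart. Once these two terms are under control the remaining pieces of $|||\xi|||_B$ match those of $|||\xi|||_E$, and the second inequality follows.

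The main obstacle is exactly this $\xi_q$-promotion step: both the modified flux $\widehat U_{3N/4}$ (supplying the $\lambda_q = \epsilon^{-1/2}$ penalty) and the two-regime choice $\pi^-/\pi_b$ in $P_N^- u$ (keeping $\langle b\eta_u,\xi_u\rangle$ small on the coarse region) are indispensable. Without either, the $\epsilon^{-1/4}$ loss incurred in passing from energy to balanced norm cannot be absorbed and only the energy-order rate survives in the balanced norm.
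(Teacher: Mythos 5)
Your decomposition $e=\eta-\xi$, the Galerkin orthogonality $|||\xi|||_E^2=B(\eta;\xi)$, and the term-by-term energy estimate exploiting the orthogonality of $\pi^{\pm}$ and $\pi_b$ and the penalties at $\{0,N\}$ and $x_{3N/4}$ are exactly the paper's route to the energy bound, and that part is sound. The problem is your third paragraph. You reject the inequality $|||\xi|||_B\le C\epsilon^{-1/4}|||\xi|||_E$ on the grounds that it would multiply the $N^{-(k+1)}$ piece by $\epsilon^{-1/4}$, but that piece does not live in the discrete part: it comes from $\|\eta_u\|$ on the coarse region (via \eqref{mesh:4}) and belongs to $|||\eta|||_E$ only. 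The paper's whole point is that the carefully designed fluxes and projections force \emph{every} term of $B(\eta;\xi)$ to carry at least a factor $\epsilon^{1/4}$, so that $|||\xi|||_E\le C\bigl(\epsilon^{1/4}(N^{-1}\ln N)^{k+1}+\epsilon^{1/2}N^{-k}\bigr)$ and the ``naive'' multiplication by $\epsilon^{-1/4}$ gives precisely the claimed balanced rate for $\xi$; the projection part $\eta$ is then measured directly in the balanced norm from Lemma \ref{lem2}, where the $N^{-(k+1)}$ term appears with weight one in both norms and is dominated by $(N^{-1}\ln N)^{k+1}$. You have misdiagnosed the difficulty and, as a result, discarded the step that actually closes the argument.

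The replacement you propose does not work as sketched. Taking $r=\xi_q|_{I_i}$ in the error equation of \eqref{method--1} and using the inverse inequality gives at best $\|\xi_q\|_{I_i}\le C\epsilon h_i^{-1}\|\xi_u\|_{I_{i-1}\cup I_i}+C\|\eta_q\|_{I_i}$. On the fine cells $h_i\ge C\sqrt{\epsilon}N^{-1}\ln N$, so $\epsilon h_i^{-1}\le C\epsilon^{1/2}N(\ln N)^{-1}$: the ``promotion'' from $\epsilon^{-1}\|\xi_q\|^2$ to $\epsilon^{-3/2}\|\xi_q\|^2$ costs not just $\epsilon^{-1/2}$ but an additional factor of order $N/\ln N$, which degrades $(N^{-1}\ln N)^{k+1}$ to roughly $N^{-k}(\ln N)^{k}$ and loses a full order of convergence. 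You would need $\|\xi_u\|$ restricted to the fine cells to be one order smaller than the global energy bound provides, and nothing in your argument supplies that. The fix is simply to keep the paper's two-track treatment: direct balanced-norm estimation of $\eta$ from Lemma \ref{lem2}, and the $\epsilon^{-1/4}$ scaling applied only to $\xi$, whose energy bound already contains the compensating $\epsilon^{1/4}$.
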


\begin{proof}
First, we divide the error $e$ into ${e}=(q-Q,u-U)=\bold{\eta-\chi}$ with 
\begin{equation*}
\begin{aligned}
&\bold{\eta}=(\eta_q,\eta_u)=(q-P^{+}_N q,u-P^{-}_N u),\\
&\bold{\chi}=(\chi_q,\chi_u)=(Q-P^{+}_N q,U-P^{-}_N u)\in V_N \times V_N.
\end{aligned}
\end{equation*}

From Lemma \ref{lem1}, the consistency of numerical fluxes and \eqref{method:2}, we have
\begin{equation}\label{norm:4}
|||\chi|||_E^2=B(\chi;\chi)=B(\eta;\chi)\equiv \sum_{j=1}^8 S_j.
\end{equation}
Here,
\begin{equation*}
\begin{aligned}
&S_1=\epsilon^{-1}\left<\eta_q,\chi_q\right>,\quad  &&S_2=\left<\eta_u,\chi'_q\right>,\\
&S_3=\left<\eta_q,\chi'_u\right>, \quad &&S_4=\left<b\eta_u,\chi_u\right>,\\
&S_5=-\sum_{i=0}^{N-1}(\eta_q)^{+}_i[[\chi_u]]_i-(\eta_q)^{-}_N(\chi_u)^{-}_N,\quad &&S_6=-\sum_{i=1}^{N-1}(\eta_u)^{-}_i[[\chi_q]]_i,\\
&S_7=\lambda_q[[\eta_q]]_{\frac{3}{4}N}[[\chi_q]]_{\frac{3}{4}N},\quad &&S_8=\sum_{i\in\{0,N\}}\lambda_i[[\eta_u]]_i[[\chi_u]]_i.
\end{aligned}
\end{equation*}
Below, we will estimate them in sequence.

From H\"{o}lder inequalities and \eqref{mesh:3}, 
\begin{align*}
S_1\le C(\epsilon^{-\frac{1}{2}}||\eta_q||)(\epsilon^{-\frac{1}{2}}||\chi_q||)\le C\epsilon^{\frac{1}{4}}(N^{-1}\ln N)^{k+1}|||\chi|||_E.
\end{align*}

Then from the orthogonality of the projection $P_{N}^{-}u$ and $P_{N}^{+}q$, we have $S_2=S_3=0$.

For $S_4$, we divide it into the following two parts,
$$S_4=\left<b\eta_u,\chi_u\right>_{[0, x_{N/4}]\cup [x_{3N/4}, 1]}+\left<b\eta_u,\chi_u\right>_{[x_{N/4}, x_{3N/4}]},$$
where from H\"{o}lder inequalities and \eqref{mesh:2}, 
\begin{align*}
\left<b\eta_u,\chi_u\right>_{[0, x_{N/4}]\cup [x_{3N/4}, 1]}
   &\le C||\eta_u||_{[0, x_{N/4}]\cup [x_{3N/4}, 1]}|||\chi|||_E\\
   &\le C\epsilon^{\frac{1}{4}}(N^{-1}\ln N)^{k+1}|||\chi|||_E,
\end{align*}
and the definition of $\pi_{b}$, $\left<b\eta_u,\chi_u\right>_{[x_{N/4}, x_{3N/4}]}=0$.
%

From H\"{o}lder inequalities, \eqref{mesh:5} and $\lambda_0=\lambda_N =\epsilon^{\frac{1}{2}}$, 
\begin{align*}
S_5&=(\eta_q)^{+}_0(\chi_u)^{+}_0-(\eta_q)^{-}_N(\chi_u)^{-}_N\\
   &\le C\left(\sum_{j\in\{0, N\}}\lambda_i^{-1}[[\eta_q]]^{2}_i \right)^{1/2}\left(\sum_{j\in\{0, N\}}\lambda_i [[\chi_u]]^{2}_i \right)^{1/2}\\
   &\le C \epsilon^{-\frac{1}{4}}||\eta_q||_{L^{\infty}(\Omega)}|||\chi|||_E\\
   &\le C\epsilon^{\frac{1}{4}}(N^{-1}\ln N)^{k+1}|||\chi|||_E.
\end{align*}

From H\"{o}lder inequalities, the trace inequality, \eqref{mesh:4}, $\lambda_q=\epsilon^{-\frac{1}{2}}$ and $\epsilon \le CN^{-1}$, one has
\begin{align*}
S_6&=|-\sum_{i=N/4+1}^{3N/4-1}(\eta_u)^{-}_i[[\chi_q]]_i-(\eta_u)^{-}_{3N/4}[[\chi_q]]_{\frac{3}{4}N}|\\
     &\le C\sum_{i=N/4+1}^{3N/4-1}||\eta_u||_{L^{\infty}(I_i)}h_i^{\frac{1}{2}}||\chi_q||_{I_i \cup I_{i+1}}+C\lambda_q^{-\frac{1}{2}}||\eta_u||_{L^{\infty}(I_{3N/4})}(\sqrt{\lambda_q}[[\chi_q]]_{\frac{3}{4}N})\\
&\le C\left(\epsilon\sum_{i=N/4+1}^{3N/4-1}h_i^{-1}||\eta_u||^{2}_{L^{\infty}(I_i)}\right)^{\frac{1}{2}}|||\chi|||_{E}+
C\epsilon^{\frac{1}{4}}N^{-(k+1)}|||\chi|||_{E}\\
     &\le C\left(\epsilon^{\frac{1}{2}}N^{-k}+C\epsilon^{\frac{1}{4}}N^{-(k+1)}\right)|||\chi|||_E.
\end{align*}

According to \eqref{mesh:5}, \eqref{mesh:6} and $\lambda_q =\epsilon^{-\frac{1}{2}}$, we obatin
\begin{align*}
S_7\le C \left(\lambda_q[[\eta_q]
]^2_{\frac{3}{4}N}\right)^{\frac{1}{2}} \left(\lambda_q[[\chi_q]]^2_{\frac{3}{4}N}\right)^{\frac{1}{2}}\le C\epsilon^{\frac{1}{4}}(N^{-1}\ln N)^{k+1}|||\chi|||_E.
\end{align*}

From H\"{o}lder inequalities, \eqref{mesh:4}, \eqref{mesh:5} and $\lambda_0 =\lambda_N =\epsilon^{\frac{1}{2}}$,  one obtains
\begin{equation*}
\begin{aligned}
S_8&=\sum_{i\in \{0, N\}}\lambda_i[[\eta_u]]_i[[\chi_u]]_i\\
      &\le C\left(\sum_{i\in \{0, N\}}\lambda_i[[\eta_u]]^{2}_i\right)^{\frac{1}{2}}\left(\sum_{i\in \{0,N\}}\lambda_i[[\chi_u]]^{2}_i\right)^{\frac{1}{2}}\\
      &\le C \epsilon^{\frac{1}{4}}||\eta_u||_{L^{\infty}(\Omega)}|||\chi|||_E\\
      &\le C\epsilon^{\frac{1}{4}}(N^{-1}\ln N)^{k+1}|||\chi|||_E.
\end{aligned}
\end{equation*}

By gathering all the above estimates, we get
\begin{equation*}
\begin{aligned}
B(\eta;\chi)\le C\epsilon^{\frac{1}{4}}(N^{-1}\ln N)^{k+1}|||\chi|||_{E}.
\end{aligned}
\end{equation*}
Using \eqref{norm:4}, there is
\begin{equation*}\label{norm:5}
|||\chi|||^2_{E}= B(\chi;\chi)=B(\eta;\chi)\le C\left(\epsilon^{\frac{1}{4}}(N^{-1}\ln N)^{k+1}+\epsilon^{\frac{1}{2}}N^{-k}\right)|||\chi|||_{E}.
\end{equation*}
Therefore,  it is straightforward to derive
\begin{align}
&|||\chi|||_{E}\le C\epsilon^{\frac{1}{4}}(N^{-1}\ln N)^{k+1}+C\epsilon^{\frac{1}{2}}N^{-k},\label{norm:6}\\
&|||\chi|||_{B}\le C\epsilon^{-\frac{1}{4}}|||\chi|||_E\le C(N^{-1}\ln N)^{k+1}+C\epsilon^{\frac{1}{4}}N^{-k}. \label{norm:7}
\end{align} 

Furthermore, from Lemma \ref{lem2}, \ref{norm:1} and \ref {norm:2}, there are
\begin{align}
&|||\eta|||_{E}\le C\epsilon^{\frac{1}{4}}(N^{-1}\ln N)^{k+1}+CN^{-(k+1)},\label{norm:8}\\
&|||\eta|||_{B}\le C(N^{-1}\ln N)^{k+1}.\label{norm:9}
\end{align} 
Combining \eqref{norm:6}, \eqref{norm:7}, \eqref{norm:8} and \eqref{norm:9}, Theorem \ref{theorem1} can be obtained.
%
\end{proof}

\begin{remark}
For Theorem \ref{theorem1}, when we choose $\epsilon \le CN^{-4}$, 
$$|||e|||_{E}\le C\left(N^{-(k+2)}(\ln N)^{k+1}+N^{-(k+1)}\right),\quad
|||e|||_{B}\le C(N^{-1}\ln N)^{k+1}.$$
Unlike \cite{c2022-balanced}, the optimal convergence order can be proved without the condition that the smooth component of the solution belongs to the finite element space.
\end{remark}

\section{LDG method for two-dimensional case}
We consider a two-dimensional singularly perturbed problem:
\begin{align}\label{eq3.1}
&-\epsilon \Delta u+b(x,y)u=f(x,y),\quad \text{in $ \Omega =(0,1)^2$},\\
&u=0,\quad \text{on $ \partial\Omega$}. \nonumber
\end{align}
where $0 < \epsilon \ll 1$ is a perturbation parameter,  $b(x, y) \geqslant 2\beta ^2 > 0$ and $f(x,y)$ are sufficient smooth functions on $\overline{\Omega}$. 
It is widely known that there is a unique solution to this problem, characterized by the presence of a boundary layer with a width of $\mathcal{O}(\sqrt{\epsilon}\ln(1/\epsilon))$ along $\partial\Omega$.

\subsection{LDG method}
The two-dimensional Shishkin mesh $\{(x_i
, y_j ), i, j=1, 2, \cdots,N\}$ is constructed by the tensor product of the Shishkin mesh in 1D \eqref{mesh:1} in both vertical and horizontal directions.
Here $x_i$ is presented in \eqref{mesh:1} and $y_j$ is defined in a same way. 

Then for $i,j=1,\cdots,N$, we define $I_{i, x}=(x_{i-1},x_i)$ and $J_{j, y}=(y_{j-1},y_j)$. Let $h_{i, x}=x_i-x_{i-1}$ and $h_{j, y}=y_j-y_{j-1}$. In addition, suppose that
$$
\Omega_N=\{\text{$\kappa_{ij}$: $\kappa_{ij} = I_{i, x}\times J_{j, y}$ for $i, j=1,2,3,\cdots, N$}\}
$$ 
is a rectangle partition of the domain $\Omega$. We define the $k$-degree discontinuous finite element space by: $$R_{N}=\{v\in L^2(\Omega):\text{$v|_{\kappa_{ij}}\in \mathbb{Q}_k(\kappa_{ij})$ }, \kappa_{ij}\in \Omega_N \},$$ where $\mathbb{Q}_k(\kappa_{ij})$ is the space of polynomials on $\kappa_{ij}$, with a maximum degree of $k\ge 1$ in each variable. For $v \in R_N$ and $y \in J_{j, y}$, $j = 1, 2, \ldots ,N$, $v^{\pm}_{i,y}=\lim\limits_{x\to x^{\pm}_i}v(x,y)$ and $v^{\pm}_{x,j}=\lim\limits_{y\to y^{\pm}_j}v(x,y)$ are used to express the traces evaluated from the four directions. Then we define the jumps  as:
\begin{align*}
&&&[[v]]_{i,y}=v^{-}_{i,y}-v^{+}_{i,y}\text{ for $i=1,2,\cdots,N-1$},\; &&[[v]]_{0,y}=-v^{+}_{0,y},\;&[[v]]_{N,y}=v^{-}_{N,y}.\\ 
&&&[[v]]_{x,j}=v^{-}_{x,j}-v^{+}_{x,j}\text{ for $j=1,2,\cdots,N-1$},\; &&[[v]]_{x,0}=-v^{+}_{x,0},\;&[[v]]_{x,N}=v^{-}_{x,N}.
\end{align*}

Below, we shall introduce the LDG method  for \eqref{eq3.1}. Firstly, we rewrite the problem \eqref{eq3.1} as follows:
\begin{equation*}
\begin{aligned}
&\epsilon^{-1}p=u_x,\quad&& \text{in $\Omega$}, \\
&\epsilon^{-1}q=u_y,\quad&& \text{in $\Omega$}, \\
&-p_x-q_y+bu=f,\quad&& \text{in $\Omega$}.
\end{aligned}
\end{equation*}
Suppose that $Z=(v,s,r)\in R_N \times R_N\times R_N$ is any test function, and $(\cdot,\cdot)_{D}$ is the $L^2$ inner product in $D\subset \mathbb{R}^2$. Suppose that $\left< \cdot,\cdot\right>_{I}$ is the inner product in $L^2(I)$ with $I\subset \mathbb{R}$.
Find $T=(U,P,Q)\in R_N \times R_N\times R_N$ such that the  variational formulas hold in $\kappa_{ij}$,
\begin{equation}\label{LDG2:1}
\begin{aligned}
&\epsilon^{-1}(P,s)_{\kappa_{ij}}+(U,s_x)_{\kappa_{ij}}-\left< \widehat{U}_{i,y}, s^{-}_{i,y}\right>_{J_{j, y}}+\left< \widehat{U}_{i-1,y}, s^{+}_{i-1,y}\right>_{J_{j, y}}=0,\\
&\epsilon^{-1}(Q,r)_{\kappa_{ij}}+(U,r_y)_{\kappa_{ij}}-\left<\widehat{U}_{x,j}, r^{-}_{x,j}\right>_{I_{i, x}}+\left<\widehat{U}_{x,j-1}, r^{+}_{x,j-1}\right>_{I_{i, x}}=0,\\
&(P,v_x)_{\kappa_{ij}}-\left<\widehat{P}_{i,y}, v^{-}_{i,y}\right>_{J_{j,y}}+\left<\widehat{P}_{i-1,y}, v^{+}_{i-1,y}\right>_{J_{j,y}}+(Q,v_y)_{\kappa_{ij}}-\left<\widehat{Q}_{x,j}, v^{-}_{x,j}\right>_{I_{i,x}}\\
&+\left<\widehat{Q}_{x,j-1}, v^{+}_{x,j-1}\right>_{I_{i, x}}+(bU,v)_{\kappa_{ij}}=(f,v)_{\kappa_{ij}}.
\end{aligned}
\end{equation}
For $y\in J_{j,y}, j=1,2,\ldots,N$,  the numerical fluxes are defined by
\begin{equation*}
\begin{aligned}
&\widehat{P}_{i,y}=
\left\{
\begin{aligned}
& P^{+}_{0,y}+\lambda_{0,y}U^{+}_{0,y}, \quad &&\text{$i=0$},\\
& P^{+}_{i,y}, \quad &&\text{$i=1,2,\cdots,N-1$},\\
& P^{-}_{N,y}-\lambda_{N,y}U^{-}_{N,y}, \quad &&\text{$i=N$},
\end{aligned}
\right. \label{eq:Bakhvalov mesh-Roos}\\
&\widehat{U}_{i,y}=
\left\{
\begin{aligned}
& 0, \quad &&\text{$i=0,N$},\\
&U_{{\frac{3}{4}N},y}^{-}-\lambda_P[[P]]_{{\frac{3}{4}N},y},\quad &&\text{$i=\frac{3}{4}N$},\\
& U^{-}_{i,y}, \quad &&\text{$i=1,2,\cdots,\frac{3}{4}N-1,\frac{3}{4}N+1,\ldots,N-1$},\\
\end{aligned}
\right.\\
\end{aligned}
\end{equation*}
where we take $\lambda_{0,y}=\lambda_{N,y}=\epsilon^{\frac{1}{2}}$ and $\lambda_P=\epsilon^{-\frac{1}{2}}$. Similarly, when $x\in I_{i,x} (i=1,2,\cdots, N)$, we define $\widehat{U}_{x,j}$ and $\widehat{Q}_{x,j}$ for $j=1,2,\ldots,N$, where $\widehat{U}_{x,\frac{3}{4}N}=U_{x,{\frac{3}{4}N}}^{-}-\lambda_Q[[Q]]_{x,{\frac{3}{4}N}}$ and $\lambda_Q=\epsilon^{-\frac{1}{2}}$.

 Then the scheme \eqref{LDG2:1} by a compact form can be expressed as: Find  $T=(U,P,Q)\in R_N\times R_N\times R_N$ to satisfy
\begin{equation*}
B(T;Z)=(f,v),\quad \forall Z=(v,s,r)\in R_N\times R_N\times R_N,
\end{equation*}
where
\begin{equation}\label{LDG2:2}
\begin{aligned}
B(T;Z)=&(b U,v)+\epsilon^{-1}(P,s)+\epsilon^{-1}(Q,r)\\
&+(U,s_x)-\sum_{j=1}^{N}\sum_{i=1}^{N-1}\left<U^{-}_{i,y},[[s]]_{i,y}\right>_{J_{j,y}}
+(U,r_y)-\sum_{i=1}^{N}\sum_{j=1}^{N-1}\left<U^{-}_{x,j},[[r]]_{x,j}\right>_{I_{i, x}}\\
&+(P,v_x)-\sum_{j=1}^{N}\left \{ \sum_{i=0}^{N-1}\left<P^{+}_{i,y},[[v]]_{i,y}\right>_{J_{j,y}}+\left<P^{-}_{N,y},[[v]]_{N,y}\right>_{J_{j,y}} \right\}\\
&+(Q,v_y)-\sum_{i=1}^{N}\left \{ \sum_{j=0}^{N-1}\left<Q^{+}_{x,j},[[v]]_{x,j}\right>_{I_{i, x}}+\left<Q^{-}_{x,N},[[v]]_{x,N}\right>_{I_{i, x}} \right\}\\
&+\sum_{j=1}^{N}\sum_{i\in \{0,N\}}\left<\lambda_{i,y} [[U]]_{i,y},[[v]]_{i,y}\right>_{J_{j,y}}+\sum_{i=1}^{N}\sum_{j\in \{0,N\}}\left<\lambda_{x,j} [[U]]_{x,j},[[v]]_{x,j}\right>_{I_{i, x}}\\
&+\sum_{j=1}^{N}\left<\lambda_P[[P]]_{\frac{3}{4}N,y},[[s]]_{\frac{3}{4}N,y}\right>_{J_{j,y}}+\sum_{i=1}^{N}\left<\lambda_Q[[Q]]_{x,\frac{3}{4}N},[[r]]_{x,\frac{3}{4}N}\right>_{I_{i, x}}.
\end{aligned}
\end{equation}

\subsection{Regularity of the solution}
\begin{lemma}\label{lem3}
(\cite{c2022-balanced,Cl1Gr2:2005-motified}) Suppose that $f$ satisfies sufficient compatibility conditions and $b$ is sufficiently smooth. The solution $u$ of \eqref{eq3.1} admits the decomposition
\begin{align*}
u=S+\sum_{t=1}^4 W_t+\sum_{t=1}^4 Z_t, \quad \text{$(x,y)\in \overline{\Omega}$},
\end{align*}
where $S$ is a smooth component, $W_t$ is a boundary layer component and $Z_t$ is a corner layer component.
Furthermore, for $0 \le i+j \le k + 2$, one has
\begin{equation*}\label{solution2:1}
\begin{aligned}
&|\frac{\partial^{i+j}S}{\partial x^i \partial y^j}|\le C,\\
&|\frac{\partial^{i+j}W_1}{\partial x^i \partial y^j}|\le C\epsilon^{-\frac{i}{2}}e^{-\frac{\beta x}{\sqrt{\epsilon}}},\\ 
|
&\frac{\partial^{i+j}Z_1}{\partial x^i \partial y^j} |\le C\epsilon^{-\frac{i+j}{2}}e^{-\frac{\beta (x+y)}{\sqrt{\epsilon}}},
\end{aligned}
\end{equation*}
and so on for the remaining terms. 
\end{lemma}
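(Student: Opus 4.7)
The plan is to construct the decomposition via a matched asymptotic expansion adapted to the reaction--diffusion operator $L_\epsilon = -\epsilon\Delta + b$. First I would expand $u$ formally as $u \sim \sum_{m \ge 0} \epsilon^{m/2}\bigl(S_m + \sum_{t=1}^{4} W_{t,m} + \sum_{t=1}^{4} Z_{t,m}\bigr)$, where $S_m$ captures the smooth behaviour away from the boundary, $W_{t,m}$ corrects the trace discrepancy on the $t$-th edge of $\partial\Omega$, and $Z_{t,m}$ fixes the interaction at the $t$-th corner where two adjacent edge-layer terms would otherwise conflict. Truncating the expansion at order $m = k+2$ and collecting the leading contributions defines $S$, the $W_t$, and the $Z_t$; the remainder is absorbed into $S$ and shown to inherit the claimed uniform bounds.

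Next I would derive the defining equations. The smooth component $S$ should solve $L_\epsilon S = f$ on $\Omega$ with boundary data chosen so that the successive boundary-layer corrections vanish to the required order; the compatibility hypothesis on $f$ at the four corners, combined with classical elliptic regularity applied to the limit problem $bS = f$, yields $|\partial^{i+j}S/\partial x^i \partial y^j| \le C$ uniformly in $\epsilon$ for $i+j \le k+2$. For a boundary-layer term such as $W_1$ at $x=0$, I would introduce the stretched variable $\xi = x/\sqrt{\epsilon}$, reduce to a one-dimensional reaction--diffusion problem parametrised by $y$, and extract the factor $e^{-\beta x/\sqrt{\epsilon}}$ either from an explicit representation or via a comparison argument. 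Corner-layer components $Z_t$ are constructed analogously using both stretched variables simultaneously.

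The pointwise derivative estimates would then be obtained by maximum-principle comparisons of $W_t$ and $Z_t$ against exponential barrier functions of the form $C\,e^{-\beta x/\sqrt{\epsilon}}$ and $C\,e^{-\beta(x+y)/\sqrt{\epsilon}}$ respectively, exploiting $b \ge 2\beta^2$ to dominate the lower-order term in $L_\epsilon$. Higher-order bounds follow by differentiating the defining PDEs and iterating the barrier argument, the scaling factor $\epsilon^{-(i+j)/2}$ emerging naturally from the chain rule on the stretched variables.

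The hard part, I expect, is the corner-layer construction: each $Z_t$ must simultaneously neutralise the traces of two intersecting boundary layers along the adjacent edges and the mismatch of $S$ at the corner, and obtaining sharp bounds on mixed derivatives requires the full strength of the compatibility conditions on $f$ and $b$ at the four corners. Because Lemma \ref{lem3} is quoted verbatim from \cite{c2022-balanced,Cl1Gr2:2005-motified}, where these barrier-function constructions are carried out in detail, my proof sketch would follow exactly that template and simply cite the existing analysis for the technical steps.
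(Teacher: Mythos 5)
The paper gives no proof of this lemma at all---it is imported verbatim from \cite{c2022-balanced,Cl1Gr2:2005-motified}---and your outline (reduced problem for $S$, stretched-variable edge layers, corner correctors, and barrier-function bounds using $b\ge 2\beta^2$) is exactly the standard construction carried out in those references, which you correctly end by citing. The only point to watch is that for an edge layer such as $W_1$ the sharp bound is anisotropic, $\epsilon^{-i/2}$ rather than $\epsilon^{-(i+j)/2}$, so the tangential derivatives must be handled by differentiating the layer problem in $y$ rather than by the chain rule on a stretched variable; your ``1D problem parametrised by $y$'' remark covers this, but it deserves to be made explicit.
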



\subsection{Projections}
For the later analytical proof, we construct a new projection by combining the local (or the weighted local) $L^2$ projection and the Gau{\ss}-Radau projection \cite{m2021-local}.
\begin{itemize}
\item
\textbf{Local $L^2$ projection $\Pi$.} For each cell $\kappa_{ij} \in \Omega_N$ and $z\in L^{2}({\Omega})$, $\Pi z\in R_N$ is defined as:
\begin{equation*}
\begin{aligned}
(\Pi z, v)_{\kappa_{ij}}=(z, v)_{\kappa_{ij}}, \quad \forall v\in \mathbb{Q}_k(\kappa_{ij}).
\end{aligned}
\end{equation*}
\item 
\textbf{Weighted local $L^2$ projection $\Pi_{b}$.} Let $w\in C^{1}(\Omega_N)$
and $w\ge w_{0} > 0$ be a general weight function. The weighted local $L^2$ projection $\Pi_{w}$ is defined as: For each cell $\kappa_{ij}\in \Omega_N$ and $z\in L^2(\Omega)$, $\Pi _{w}z\in R_N$ meets
$$(w\Pi_{w}z, v)_{\kappa_{ij}} = (wz, v)_{\kappa_{ij}},\quad \forall v\in \mathbb{Q}_{k}(\kappa_{ij})$$
When $w = 1$, the operator shall reduce to the local $L^2$ projection $\Pi$. In this paper, we take $b(x)$ as a  weight function and $b(x)\ge 2\beta^{2}>0$.
\item
\textbf{Gau{\ss}-Radau projection $\Pi^{\pm}$.} First, we introduce the following notation: For $l$, $m\ge 1$
$$
\mathbb{Q}_{l, m}=\left\{\text{$\sum_{i=0}^{l} \sum_{j=0}^m a_{ij}x^i y^j$:  $a_{ij}\in \mathbb{R}$}\right\}.
$$  
 For any $\kappa_{ij} \in \Omega_N$ and $z\in H^1(\kappa_{ij})$, we define $\Pi^{-}_x z,\Pi^{-}_y z, \Pi^{+}_x z,\Pi^{+}_y z\in \mathbb{Q}_k(\kappa_{ij})$ as:
\begin{equation*}
\begin{aligned}
&\left\{
\begin{aligned}
& (\Pi^{-}_x z,w)_{\kappa_{ij}}=(z,w)_{\kappa_{ij}}, \quad &&\text{$\forall w\in \mathbb{Q}_{k-1,k}$},\\
& \left<(\Pi^{-}_x z)^{-}_{i,y},w\right>_{J_{j,y}}=\left<z^{-}_{i, y},w\right>_{J_{j,y}}, \quad &&\forall w\in \mathbb{P}_{k}(J_{j,y}),
\end{aligned}
\right. \\
&\left\{
\begin{aligned}
& (\Pi^{-}_y z,w)_{\kappa_{ij}}=(z,w)_{\kappa_{ij}}, \quad &&\text{$\forall w\in \mathbb{Q}_{k,k-1}$} ,\\
& \left<(\Pi^{-}_y z)^{-}_{x,j},w\right>_{I_{i,x}}=\left<z^{-}_{x,j},w\right>_{I_{i,x}}, \quad &&\forall w\in \mathbb{P}_{k}(I_{i,x}),
\end{aligned}
\right. \\
&\left\{
\begin{aligned}
& (\Pi^{+}_x z,w)_{\kappa_{ij}}=(z,w)_{\kappa_{ij}}, \quad &&\text{$\forall w\in \mathbb{Q}_{k-1,k}$} ,\\
& \left<(\Pi^{+}_x z)^{+}_{i,y},w\right>_{J_{j,y}}=\left<z^{+}_{i,y},w\right>_{J_{j,y}}, \quad &&\forall w\in \mathbb{P}_{k}(J_{j,y}),
\end{aligned}
\right. \\ 
&\left\{
\begin{aligned}
& (\Pi^{+}_y z,w)_{\kappa_{ij}}=(z,w)_{\kappa_{ij}}, \quad &&\text{$\forall w\in \mathbb{Q}_{k,k-1}$} ,\\
& \left<(\Pi^{+}_y z)^{+}_{x,j},w\right>_{I_{i,x}}=\left<z^{+}_{x,j},w\right>_{I_{i,x}}, \quad &&\forall w\in \mathbb{P}_{k}(I_{i,x}).
\end{aligned}
\right. \\ 
\end{aligned}
\end{equation*}

\end{itemize}
The existence and uniqueness of Gau{\ss}-Radau projection is referred to \cite{A1999-An}, and the corresponding approximate estimation refers to \cite[Lemma 4.3]{m2021-local}.

Now we design the following projection:
\begin{equation*}
\begin{aligned}
&(P^{-} u)|_{\kappa_{ij}}=
\left\{
\begin{aligned}
& \Pi^{-}_x u,\quad &&\text{$i=1,\cdots,N/4,3N/4+1,\cdots,N-1,\quad j=N/4+1,\cdots,3N/4$},\\
& \Pi^{-}_y u,\quad &&\text{$i=N/4+1,\cdots,3N/4, \quad j=1,\cdots,N/4,3N/4+1,\cdots,N-1 $},\\
& \Pi_{b} u, \quad &&\text{other else},\\
\end{aligned}
\right. \\
&(P^{+}_x  p)|_{\kappa_{ij}}=
\left\{
\begin{aligned}
& \Pi p,\quad &&\text{$i=1,\quad j=1,\cdots,N$},\\
& \Pi^{+}_x p, \quad &&\text{other else},\\
\end{aligned}
\right.  \\
&(P^{+}_y  q)|_{\kappa_{ij}}=
\left\{
\begin{aligned}
& \Pi q,\quad &&\text{$i=1,\cdots,N, \quad j=1$},\\
& \Pi^{+}_y q, \quad &&\text{other else}.\\
\end{aligned}
\right. 
\end{aligned}
\end{equation*}

Let $||z||^2=\sum\limits_{i=1}^N\sum\limits_{j=1}^N||z||_{\kappa_{ij}}^2$ and $||z||^2_{\kappa_{ij}}=(z,z)_{\kappa_{ij}}$. And we will give the following estimates for $u$ and $p$, and  the error estimates for $q$ can be derived in a similar way.
\begin{lemma}\label{lem4}
(\cite{m2021-local}) Let $\epsilon\le CN^{-1}$ and $\sigma \ge k+1$, then on the two-dimensional Shishkin mesh, one has
\begin{align}
&||u-P^{-} u||_{\Omega/\Omega_{22}}\le C\epsilon^{\frac{1}{4}}(N^{-1}\ln N)^{k+1},\label{solution2:2}\\
&||p-P^{+}_x p||\le C\epsilon^{\frac{3}{4}}(N^{-1}\ln N)^{k+1}+C\epsilon N^{-(k+1)}(\ln N)^{k+\frac{3}{2}},\label{solution2:3}\\
&||u-P^{-} u||_{L^{\infty}(\Omega)}\le C(N^{-1}\ln N)^{k+1},\label{solution2:4}\\
&||p-P^{+}_x p||_{L^{\infty}(\Omega)}\le C\epsilon^{\frac{1}{2}}(N^{-1}\ln N)^{k+1},\label{solution2:5}
\end{align}
where $\Omega_{22}:=[x_{N/4}, x_{3N/4}]\times [y_{N/4}, y_{3N/4}]$.
\end{lemma}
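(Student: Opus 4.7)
The plan is to exploit the layer decomposition of Lemma~\ref{lem3} and reduce the 2D projection analysis to combinations of 1D Gau{\ss}-Radau and weighted $L^{2}$ approximation bounds (the tensor-product analogue of Lemma~\ref{lem4-111}) on each subregion of the Shishkin mesh. First I would decompose $\Omega/\Omega_{22}$ as the disjoint union of four edge strips and four corner rectangles determined by the transition points $x_{N/4},x_{3N/4},y_{N/4},y_{3N/4}$, and then expand $u=S+\sum_{t=1}^{4}W_t+\sum_{t=1}^{4}Z_t$ on each piece. The idea is that on every subregion the piecewise definition of $P^{-}$ is chosen so that the projector matches the anisotropy of the dominant layer there.

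Next I would estimate each component separately. For the smooth part $S$, the uniform bound on its derivatives combined with the anisotropic approximation $\|S-\mathbb{R}S\|_{\kappa_{ij}}\le C(h_{i,x}^{k+1}+h_{j,y}^{k+1})$ and the area $O(\sqrt{\epsilon}\ln N)$ of $\Omega/\Omega_{22}$ immediately gives the $\epsilon^{1/4}(N^{-1}\ln N)^{k+1}$ order. For a boundary layer component aligned with an edge strip, say $W_1$ in the left strip, I would combine the fine mesh width $h_x\lesssim \sqrt{\epsilon}N^{-1}\ln N$ with the derivative bound $|\partial_x^{k+1}W_1|\lesssim \epsilon^{-(k+1)/2}$, producing $(N^{-1}\ln N)^{k+1}$ pointwise and the $\epsilon^{1/4}$ prefactor after integration over the strip. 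For non-aligned boundary layers and for corner layers evaluated outside their own corner, the pointwise exponential bound reduces the contribution to $N^{-\sigma}$ with $\sigma\ge k+1$, which is subdominant. In the four corner rectangles the mesh is fine in both directions, so the weighted $L^{2}$ projection $\Pi_{b}$ delivers anisotropic approximation of $Z_t$ and of the remaining pieces at the same $(N^{-1}\ln N)^{k+1}$ rate.

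The estimate on $\|p-P^{+}_{x}p\|$ is analogous, applied to the decomposition induced on $p=\epsilon u_x$, whose derivatives carry one factor of $\epsilon$ and lose one factor of $\epsilon^{-1/2}$ per additional $x$-differentiation of a layer; this explains the $\epsilon^{3/4}$ prefactor. The extra summand $C\epsilon N^{-(k+1)}(\ln N)^{k+3/2}$ originates in the single column $i=1$ where $P^{+}_{x}$ uses the plain $\Pi$ rather than $\Pi^{+}_{x}$: here the missing face-matching costs a factor through a trace inequality, which accounts for the additional $(\ln N)^{1/2}$. The $L^{\infty}$ bounds \eqref{solution2:4} and \eqref{solution2:5} then follow from the $L^{\infty}$-stability of all four projections and pointwise application of the same layer splitting, each term being controlled by either $(N^{-1}\ln N)^{k+1}$ or the exponentially small remainder $N^{-\sigma}$.

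The main obstacle will be the bookkeeping in the corner rectangles: the projection switches from a Gau{\ss}-Radau type, which enforces one-sided trace matching and yields clean orthogonality at the interior faces of an edge strip, to the weighted $L^{2}$ projection $\Pi_{b}$, which enforces no trace matching at all. One must verify that $\Pi_{b}$ still inherits the correct anisotropic approximation for the mixed corner layer $Z_t$ on cells whose mesh widths differ between the two directions, while maintaining the $(N^{-1}\ln N)^{k+1}$ rate uniformly and without spoiling the $\epsilon^{1/4}$ prefactor when summed over the $O(\epsilon \ln^{2}N)$-area corner. This is precisely the technical step carried out in \cite{m2021-local}, which is why the authors invoke that reference rather than redo the computation here.
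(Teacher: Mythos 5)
Your outline is essentially the argument the paper relies on: the paper's entire proof of Lemma~\ref{lem4} is a one-line appeal to the computations in \cite{m2021-local}, and your region-by-region splitting of $\Omega/\Omega_{22}$, the component-wise use of the decomposition from Lemma~\ref{lem3}, the anisotropic approximation bounds in the fine strips, the $N^{-\sigma}$ control of layer components evaluated away from their own boundary, and the $L^{\infty}$-stability of the projections for \eqref{solution2:4}--\eqref{solution2:5} are exactly the standard steps carried out there. The one speculative point is your attribution of the extra $C\epsilon N^{-(k+1)}(\ln N)^{k+\frac{3}{2}}$ summand in \eqref{solution2:3} to the first column where $\Pi$ replaces $\Pi^{+}_x$ --- in the 1D analogue (estimate \eqref{mesh:3} of Lemma~\ref{lem2}) the same device produces no such term, so that contribution more plausibly arises from the $y$-variation of $p$ over the strips and corners --- but since the paper defers every detail to the reference, this does not change the assessment.
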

\begin{proof}
According to the arguments in \cite{m2021-local}, the proofs of \eqref{solution2:2}, \eqref{solution2:3}, \eqref{solution2:4} and \eqref{solution2:5} can be obtained easily.
\end{proof}


\subsection{Convergence analysis}
Recall that $t=(u,p,q)$ is thesolution of \eqref{eq3.1}.
From \eqref{LDG2:2}, an natural norm is denoed as:
\begin{align}\label{norm2:1}
|||t|||^2_{E}=B(t;t)&= \epsilon^{-1}||p||^2+\epsilon^{-1}||q||^2+||b(x)^{\frac{1}{2}}u||^2\\ \nonumber
             &+\sum^{N}_{j=1}\sum_{i\in\{0,N\}}(\lambda_{i,y},[[u]]_{i,y}^2)_{J_{j,y}}
+\sum^{N}_{i=1}\sum_{j\in\{0,N\}}(\lambda_{x,j},[[u]]_{x,j}^2)_{I_{i,x}}\\ \nonumber
             &+\sum_{j=1}^N(\lambda_P,[[p]]^2_{\frac{3}{4}N,y})_{J_{j,y}}+\sum_{i=1}^N(\lambda_Q,[[q]]^2_{x, \frac{3}{4}N})_{I_{i,x}}.
\end{align}
Then we denote the balanced norm by:
\begin{align}\label{norm2:2}
|||t|||^2_{B}&= \epsilon^{-3/2}||p||^2+\epsilon^{-3/2}||q||^2+||b(x)^{\frac{1}{2}}u||^2+\sum^{N}_{j=1}\sum_{i\in\{0, N\}}(1, [[u]]_{i, y}^2)_{J_{j,y}}\\ \nonumber
&+\sum^{N}_{i=1}\sum_{j\in\{0,N\}}(1, [[u]]_{x, j}^2)_{I_{i,x}}+\sum_{j=1}^N(\epsilon^{-1}, [[p]]^2_{\frac{3}{4}N, y})_{J_{j,y}}+\sum_{i=1}^N(\epsilon^{-1}, [[q]]^2_{x, \frac{3}{4}N})_{I_{i,x}}.
\end{align}

\begin{theorem}\label{theorem2}
Suppose that $\epsilon \le CN^{-1}$ and $\sigma \ge k+1$ on the layer-adapted mesh $\{(x_i, y_j ), i, j=1, 2,\ldots, N\}$. Let $T=(U,P,Q)\in R_N \times R_N\times R_N$ be the solution of \eqref{LDG2:1}, and $t=(u,p,q)$ is the solution of \eqref{eq3.1}. Moreover, we have 
\begin{align*}
&|||e|||_{E}\le  C\left(\epsilon^{\frac{1}{4}}(N^{-1}\ln N)^{k+1}+\epsilon^{\frac{1}{2}}N^{-k}(\ln N)^{k+1}+\epsilon^{\frac{3}{4}}N^{-k}(\ln N)^{k+\frac{3}{2}}+N^{-(k+1)}\right),\\
&|||e|||_{B}\le C\left((N^{-1}\ln N)^{k+1}+\epsilon^{\frac{1}{4}}N^{-k}(\ln N)^{k+1}+\epsilon^{\frac{1}{2}}N^{-k}(\ln N)^{k+\frac{3}{2}}\right),
\end{align*}
where we define ${e=t-T}$ and $C>0$ is a constant. 
\end{theorem}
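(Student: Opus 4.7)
\textbf{Proof plan for Theorem \ref{theorem2}.}
The plan is to mirror the strategy used for Theorem \ref{theorem1}, decomposing the error into a projection part and a finite-element part through
\begin{equation*}
e = t - T = \boldsymbol{\eta} - \boldsymbol{\chi}, \quad
\boldsymbol{\eta}=(\eta_u,\eta_p,\eta_q)=(u-P^{-}u,\,p-P^{+}_x p,\,q-P^{+}_y q),\quad
\boldsymbol{\chi}=(\chi_u,\chi_p,\chi_q)\in R_N^3.
\end{equation*}
Consistency of the LDG fluxes (the analogue of \eqref{norm:4}) yields the Galerkin identity
$|||\boldsymbol{\chi}|||_{E}^{2}=B(\boldsymbol{\chi};\boldsymbol{\chi})=B(\boldsymbol{\eta};\boldsymbol{\chi})$, so the bulk of the work is to control $B(\boldsymbol{\eta};\boldsymbol{\chi})$ in terms of $|||\boldsymbol{\chi}|||_{E}$ and the projection estimates in Lemma \ref{lem4}.

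First I would expand $B(\boldsymbol{\eta};\boldsymbol{\chi})$ using \eqref{LDG2:2} into a list of contributions analogous to $S_1,\ldots,S_8$ from the 1D proof, but now with an $x$--branch and a $y$--branch reflecting the tensor structure: the volume pairing $\epsilon^{-1}(\eta_p,\chi_p)+\epsilon^{-1}(\eta_q,\chi_q)$, the reaction pairing $(b\eta_u,\chi_u)$, the four ``integration-by-parts'' volume terms (which vanish by the orthogonality properties of $\Pi^{\pm}_x,\Pi^{\pm}_y$), the edge pairings at vertical and horizontal mesh lines, the penalty pairings at $\partial\Omega$, and the two interior penalty pairings at $x=x_{3N/4}$ and $y=y_{3N/4}$ coming from $\lambda_P$ and $\lambda_Q$. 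For the reaction term I would split $\Omega=\Omega_{22}\cup(\Omega\setminus\Omega_{22})$: on $\Omega_{22}$ the weighted projection $\Pi_b$ kills it exactly by definition, while on the layer subdomain I would apply \eqref{solution2:2} to obtain the $\epsilon^{1/4}(N^{-1}\ln N)^{k+1}$ bound. The boundary penalty terms $S_5$--type at $\partial\Omega$ are controlled via Cauchy--Schwarz on the edge, the $L^{\infty}$ bound \eqref{solution2:5} for $\eta_p,\eta_q$, and the weight $\lambda=\epsilon^{1/2}$, and the interior penalty terms at $x_{3N/4},y_{3N/4}$ use \eqref{solution2:5} with $\lambda_P=\lambda_Q=\epsilon^{-1/2}$.

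The step I expect to be the main obstacle is the edge pairing of the form $\sum_{j,i}\langle (\eta_u)^-_{i,y},[[\chi_p]]_{i,y}\rangle_{J_{j,y}}$ on the interior coarse-mesh cells $x_{N/4+1},\ldots,x_{3N/4-1}$ (and its $y$--counterpart), because there is no penalty to absorb $[[\chi_p]]$ at those edges. Following the 1D argument for $S_6$ I would apply a trace inequality cell-wise to replace $[[\chi_p]]$ by $h^{-1/2}\|\chi_p\|$ and then invoke $\epsilon^{-1/2}\|\chi_p\|\le |||\boldsymbol{\chi}|||_E$, yielding a bound like $C(\epsilon\sum_i h_{i,x}^{-1}\|\eta_u\|_{L^\infty}^2)^{1/2}$; inserting the coarse-mesh size $h\sim N^{-1}$ and the $L^\infty$ bound \eqref{solution2:4} produces the $\epsilon^{1/2}N^{-k}(\ln N)^{k+1}$ factor. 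The separated contribution at the distinguished edge $x_{3N/4}$ (where the penalty $\lambda_P$ is available) is treated as in $S_6$ of Theorem \ref{theorem1} and contributes $\epsilon^{1/4}N^{-(k+1)}$. The extra $(\ln N)^{k+3/2}$ factor in the final bound comes from the second summand in \eqref{solution2:3} when bounding $\epsilon^{-1}(\eta_p,\chi_p)+\epsilon^{-1}(\eta_q,\chi_q)$ via Cauchy--Schwarz.

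Collecting the eight groups of estimates yields
$B(\boldsymbol{\eta};\boldsymbol{\chi})\le C\bigl(\epsilon^{1/4}(N^{-1}\ln N)^{k+1}+\epsilon^{1/2}N^{-k}(\ln N)^{k+1}+\epsilon^{3/4}N^{-k}(\ln N)^{k+3/2}\bigr)|||\boldsymbol{\chi}|||_E$,
which, combined with the Galerkin identity, gives the claimed bound for $|||\boldsymbol{\chi}|||_E$. The balanced-norm bound follows from the straightforward inequality $|||\boldsymbol{\chi}|||_B\le C\epsilon^{-1/4}|||\boldsymbol{\chi}|||_E$ (compare norms termwise in \eqref{norm2:1} and \eqref{norm2:2}). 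Finally, I would add the projection-error bounds $|||\boldsymbol{\eta}|||_E$ and $|||\boldsymbol{\eta}|||_B$ obtained directly from Lemma \ref{lem4} (with the boundary jumps of $\eta_u$ vanishing by Gau{\ss}--Radau continuity) and conclude via the triangle inequality $|||e|||_{\star}\le |||\boldsymbol{\eta}|||_{\star}+|||\boldsymbol{\chi}|||_{\star}$ for $\star\in\{E,B\}$.
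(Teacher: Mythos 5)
Your plan follows the paper's proof essentially verbatim: the same projection-based splitting $e=\boldsymbol{\eta}-\boldsymbol{\chi}$, the same Galerkin identity $|||\boldsymbol{\chi}|||_E^2=B(\boldsymbol{\eta};\boldsymbol{\chi})$, the same term-by-term treatment (the weighted projection $\Pi_b$ annihilating the reaction pairing on $\Omega_{22}$, orthogonality killing the volume terms, a cell-wise trace inequality plus the $\lambda_P$-penalty for the edge pairing $Y_5$, Cauchy--Schwarz with Lemma \ref{lem4} for the remaining boundary and penalty terms), the same passage $|||\boldsymbol{\chi}|||_B\le C\epsilon^{-1/4}|||\boldsymbol{\chi}|||_E$, and the same triangle-inequality conclusion. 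One minor correction: the boundary jumps of $\eta_u$ do not all vanish by Gau{\ss}--Radau continuity (at $x=0$, for instance, $P^{-}u$ is $\Pi^{-}_x u$ or $\Pi_b u$, neither of which matches $u$ on the left face of the cell), but this is harmless because the $L^\infty$ bound \eqref{solution2:4} controls these jumps directly, which is exactly how the paper estimates $Y_7$ and the jump contributions to $|||\boldsymbol{\eta}|||_E$ and $|||\boldsymbol{\eta}|||_B$.
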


\begin{proof}
First, by using projection $P^{-} u$, $P^{+}_x p$ and $P^{+}_y q$,  the error is devided as ${e=t-T}={\eta-\chi}$ with
\begin{equation*}
\begin{aligned}
&\bold{\eta}=(\eta_u,\eta_p,\eta_q)=(u-P^{-} u,p-P^{+}_x p, q-P^{+}_y q),\\
&\bold{\chi}=(\chi_u,\chi_p,\chi_q)=(U-P^{-} u,P-P^{+}_x p, Q-P^{+}_y q)\in R_N \times R_N\times R_N.
\end{aligned}
\end{equation*}

According to Lemma \ref{lem3} and the consistency of numerical fluxes, there is
\begin{equation}\label{norm2:3}
B(\chi;Z)=B(\eta;Z),\quad \forall Z=(u,p,q)\in R_N \times R_N\times R_N.
\end{equation}
Set $Z=\chi$ in \eqref{norm2:3} and use \eqref{LDG2:2}, we derive
\begin{equation}\label{norm2:4}
|||\chi|||_E^2=B(\chi;\chi)=B(\eta; \chi)\equiv \sum_{i=1}^{15}Y_i,
\end{equation}
where
\begin{equation*}\label{eq:VN}
\begin{aligned}
&Y_1=(b\eta_u,\chi_u),\quad Y_2=\epsilon^{-1}(\eta_p,\chi_p),\\
&Y_3=(\eta_u,\chi _{p,x}),\quad Y_4=(\eta_p,\chi_{u,x}),\\
&Y_5=-\sum_{j=1}^{N}\sum_{i=1}^{N-1}\left<(\eta_u)^{-}_{i,y},[[\chi_p]]_{i,y}\right>_{J_{j,y}},\\
&Y_6=-\sum_{j=1}^{N}\left \{ \sum_{i=0}^{N-1}\left<(\eta_p)^{+}_{i,y},[[\chi_u]]_{i,y}\right>_{J_{j,y}}+\left<(\eta_p)^{-}_{N,y},[[\chi_u]]_{N,y}\right>_{J_{j,y}} \right\},\\
&Y_7=\sum_{j=1}^{N}\sum_{i\in \{0,N\}}\left<\lambda_{i,y} [[\eta_u]]_{i,y},[[\chi_u]]_{i,y}\right>_{J_{j,y}},\\
&Y_8=\sum_{j=1}^{N}\left<\lambda_P[[\eta_p]]_{\frac{3}{4}N,y},[[\chi_p]]_{\frac{3}{4}N,y}\right>_{J_{j,y}}.
\end{aligned}
\end{equation*}\label{eq:VN}
Because the proof of $q$ is similar to $p$, we just estimate the part of $p$. Next, we will analyze the above parts in sequence.

By applying H\"{o}lder inequalities, the definition of the weighted local $L^2$ projection, \eqref{solution2:2} and \eqref{solution2:3}, 
\begin{align*}
&Y_1\le C||b^{\frac{1}{2}}\eta_u||_{\Omega/\Omega_{22}}||b^{\frac{1}{2}}\chi_u||\le C\epsilon^{\frac{1}{4}}(N^{-1}\ln N)^{k+1}|||\chi|||_E.\\
&Y_2\le (\epsilon^{-1/2}||\eta_p||)(\epsilon^{-1/2}||\chi_p||)\le C\left(\epsilon^{\frac{1}{4}}(N^{-1}\ln N)^{k+1}+\epsilon^{\frac{1}{2}}N^{-(k+1)}(\ln N)^{k+\frac{3}{2}}\right)|||\chi|||_E.
\end{align*}
Here $\Omega_{22}:=[x_{N/4}, x_{3N/4}]\times [y_{N/4}, y_{3N/4}]$. 

Furthermore, from the orthogonality of the projection $P^{-} u$ and $P^{+}_x  p$, we have $Y_3=Y_4=0$.

Let $\Omega_t=\Omega-\left\{\left([0,x_{N/4}]\times [y_{N/4},y_{3N/4}]\right)\cup \left([x_{3N/4},x_{N-1}]\times [y_{N/4},y_{3N/4}]\right)\right\}$. According to the trace inequality, \eqref{solution2:4}, $\epsilon \le CN^{-1}$ and $\lambda_P=\epsilon^{-\frac{1}{2}}$, we get
\begin{align*}
|Y_5|&\le C\sum_{\kappa_{ij}\in \Omega_t, i\neq 3N/4}h_{j,y}^{\frac{1}{2}}||\eta_u||_{L^{\infty}(\kappa_{ij})}||\chi_p||_{L^{\infty}((I_{i}\cup I_{i+1})\times J_{j,y})}+\sum_{j=1}^{N}\left<(\eta_u)^{-}_{3N/4,y},[[\chi_p]]_{3N/4,y}\right>_{J_{j,y}}\\
&\le C\sum_{\kappa_{ij}\in \Omega_t, i\neq 3N/4}h_{j,y}^{\frac{1}{2}}h_{i,x}^{-\frac{1}{2}}||\eta_u||_{L^{\infty}(\kappa_{ij})} ||\chi_p||+C\left(\sum_{j=1}^{N}\lambda_P^{-1} h_{j,y}||\eta_u||_{L^{\infty}(\kappa_{ij})}^2\right)^{\frac{1}{2}}|||\chi|||_E\\
&\le C\left(\epsilon^{\frac{1}{2}}N^{-k}(\ln N)^{k+1}+\epsilon^{\frac{3}{4}}N^{-k}(\ln N)^{k+\frac{3}{2}}+\epsilon^{\frac{1}{4}} (N^{-1}\ln N)^{k+1}\right)|||\chi|||_E.
\end{align*}

According to H\"{o}lder inequalities, \eqref{solution2:5}, \eqref{solution2:4} and the condition $\lambda_{0, y}=\lambda_{N, y}=\epsilon^{\frac{1}{2}}$, 
\begin{equation*}
\begin{aligned}
|Y_6|&=|-\sum_{j=1}^{N}\left(\left<(\eta_{p})_{0, y}^{+}, [[\chi_{u}]]_{0, y}\right>_{J_{j,y}}+\left<(\eta_{p})_{N, y}^{-}, [[\chi_{u}]]_{N, y}\right>_{J_{j,y}}\right)|\\
&\le \left(\sum_{j=1}^{N}\sum_{i\in \{0, N\}}\lambda_{i, y}^{-1}h_{j}||\eta_{p}||^{2}_{L^{\infty}(\Omega)}\right)^{\frac{1}{2}}|||\chi|||_{E}\\
&\le C\left(N\epsilon^{-\frac{1}{2}}N^{-1}\epsilon (N^{-1}\ln N)^{2(k+1)}\right)^{\frac{1}{2}}|||\chi|||_{E}\\
&\le C\epsilon^{\frac{1}{4}}(N^{-1}\ln N)^{k+1}|||\chi|||_E.
\end{aligned}
\end{equation*}
And in a similar way,
\begin{equation*}
\begin{aligned}
Y_7&\le C\left(\sum_{j=1}^{N}\sum_{i\in \{0, N\}}\lambda_{i, y}h_{j}||\eta_{u}||^{2}_{L^{\infty}(\Omega)}\right)^{\frac{1}{2}}|||\chi|||_E\le C\epsilon^{\frac{1}{4}}(N^{-1}\ln N)^{k+1}|||\chi|||_E.
\end{aligned}
\end{equation*}
%

From H\"{o}lder inequalities, $\lambda_P =\epsilon^{-\frac{1}{2}}$ and \eqref{solution2:5}, one has
\begin{equation*}
\begin{aligned}
Y_8
   \le C\epsilon^{\frac{1}{4}}(N^{-1}\ln N)^{k+1}|||\chi|||_E.
\end{aligned}
\end{equation*}
Combining \eqref{norm2:3} and \eqref{norm2:4}, there is
\begin{equation*}
\begin{aligned}
|||\chi|||^2_{E}=B(\eta;\chi)&\le C\left(\epsilon^{\frac{1}{4}}(N^{-1}\ln N)^{k+1}+\epsilon^{\frac{1}{2}}N^{-k}(\ln N)^{k+1}\right.\\
&\left.+C\epsilon^{\frac{3}{4}}N^{-k}(\ln N)^{k+\frac{3}{2}}\right)|||\chi|||_{E}.
\end{aligned}
\end{equation*}
Therefore, it is straightforward to derive
\begin{align}
&|||\chi|||_{E}\le C\left(\epsilon^{\frac{1}{4}}(N^{-1}\ln N)^{k+1}+\epsilon^{\frac{1}{2}}N^{-k}(\ln N)^{k+1}+\epsilon^{\frac{3}{4}}N^{-k}(\ln N)^{k+\frac{3}{2}}\right),\label{norm2:5}\\
&|||\chi|||_{B}\le C\epsilon^{-\frac{1}{4}}|||\chi|||_E\le C\left((N^{-1}\ln N)^{k+1}+\epsilon^{\frac{1}{4}}N^{-k}(\ln N)^{k+1}+\epsilon^{\frac{1}{2}}N^{-k}(\ln N)^{k+\frac{3}{2}}\right).\label{norm2:6}
\end{align} 
From Lemma \ref{lem4}, \ref{norm2:1} and \ref {norm2:2}, 
\begin{align}
&|||\eta|||_{E}\le C\epsilon^{\frac{1}{4}}(N^{-1}\ln N)^{k+1}+C\epsilon^{\frac{1}{2}}N^{-(k+1)}(\ln N)^{k+\frac{3}{2}}+CN^{-(k+1)},\label{norm2:7}\\
&|||\eta|||_{B}\le C\epsilon^{\frac{1}{4}}N^{-(k+1)}(\ln N)^{k+\frac{3}{2}}+C(N^{-1}\ln N)^{k+1}.\label{norm2:8}
\end{align} 
From \eqref{norm2:5}, \eqref{norm2:6}, \eqref{norm2:7} and \eqref{norm2:8}, The proof of Theorem \ref{theorem2} is complete.
\end{proof}
\begin{remark}
For Theorem \ref{theorem2}, if we choose $\epsilon \le CN^{-4}$, then 
$$|||e|||_{E}\le CN^{-(k+2)}(\ln N)^{k+1}+CN^{-(k+1)},\quad
|||e|||_{B}\le C(N^{-1}\ln N)^{k+1}.$$
Unlike \cite{c2022-balanced}, the optimal convergence order can be derived when the smooth component of the solution does not belong to the finite element space.
\end{remark}

%
%
%

\section{Numerical experiments}
In this section, we provide a numerical example to verify Theorem \ref{theorem1}.
The following singularly perturbed reaction-diffusion problem is considered:
\begin{equation*}
-\epsilon u''(x)+ u(x)=f(x)\quad \text{in $\Omega:=(0,1)$},\quad
u(0)=u(1)=0.
\end{equation*}
Here $f(x)$ is selected to make the solution of the above problem satisfy
\begin{equation*}
u(x)=\frac{1-e^{-1/\sqrt{\epsilon}}}{e^{-x/\sqrt{\epsilon}}-e^{-(1-x)/\sqrt{\epsilon}}}-\cos(\pi x).
\end{equation*}

In this experiment, we choose $\epsilon=10^{-4},10^{-6},10^{-8},10^{-10},10^{-12}$, $N=32,64,\ldots,1024$ and set $k=1,2,3$. On the layer-adapted mesh \eqref{mesh:1}, we choose $\sigma=k+1$.
 Set $e_N$ as the error of $|||e|||_E$ or $|||e|||_B$, where $N$ is the mesh parameter. 
The convergence rates are calculated with the formula
\begin{equation*}
r_p=\frac{\log e_N-\log e_{2N}}{\log (2\ln N/\ln 2N)}.
\end{equation*}

Table \ref{table-W-1} shows the errors and convergence rates in the energy norm. The convergence rates and errors under the balanced norm are displayed in Table \ref{table-W-2}.
Then the data provided by Table \ref{table-W-1} and Table \ref{table-W-2} support our main conclusion. Furthermore, by comparing the data in Table \ref{table-W-1} and Table \ref{table-W-2} for each $\epsilon$, it is obvious that the results are approximately $|||e|||_B=\epsilon^{-1/4}|||e|||_E$.
The example in 2D is not provided because it is similar to the situation in 1D.

\begin{table}[H] 
\caption{The errors and convergence rates under the energy norm}\label{table-W-1}
\footnotesize
\begin{tabular*}{\textwidth}{@{\extracolsep{\fill}} cccccccccccc }
\cline{1-12}
           &    &\multicolumn{2}{c}{$\epsilon=10^{-4}$} &\multicolumn{2}{c}{$\epsilon=10^{-6}$}  &\multicolumn{2}{c}{$\epsilon=10^{-8}$}   
&\multicolumn{2}{c}{$\epsilon=10^{-10}$} &\multicolumn{2}{c}{$\epsilon=10^{-12}$}   \\
 
\cline{3-12}
$k$ &$N$&$|||e|||_E$&$r_p$&$|||e|||_E$&$r_p$&$|||e|||_E$&$r_p$&$|||e|||_E$&$r_p$&$|||e|||_E$&$r_p$\\
\cline{1-12}

$1$       & $32$       & 0.26E-1  & 1.59    & 0.82E-1  & 1.59  & 0.26E-2   & 1.59  & 0.82E-3   & 1.59  &0.26E-3  &1.59\\
$ $       &$64$        & 0.12E-1  & 1.75    & 0.37E-2  & 1.75  & 0.12E-2   & 1.75  & 0.37E-3   & 1.75  &0.12E-3  &1.75\\
$ $       &$128$       & 0.45E-2  & 1.85    & 0.14E-2  & 1.85  & 0.45E-3   & 1.85  & 0.14E-3   & 1.85  &0.45E-4  &1.85\\
$ $       &$256$       & 0.16E-2  & 1.92    & 0.51E-3  & 1.92  & 0.16E-3   & 1.92  & 0.51E-4   & 1.92  &0.16E-4  &1.92\\
$ $       &$512$       & 0.53E-3  & 1.95    & 0.17E-3  & 1.95  & 0.53E-4   & 1.95  & 0.17E-4   & 1.95  &0.53E-5  &1.95\\
$ $       &$1024$      & 0.17E-3  & ---     & 0.54E-4  & ---   & 0.17E-4   & ---   & 0.54E-5    & ---  &0.17E-5  &---\\

$2$       & $32$       & 0.63E-2  & 2.44    & 0.20E-2  & 2.44  & 0.63E-3   & 2.44  & 0.20E-3   & 2.44  &0.63E-4  &2.44\\
$ $       &$64$        & 0.18E-2  & 2.65    & 0.57E-3  & 2.65  & 0.18E-3   & 2.65  & 0.57E-4   & 2.65  &0.18E-4  &2.65\\
$ $       &$128$       & 0.43E-3  & 2.80    & 0.14E-3  & 2.80  & 0.43E-4   & 2.80  & 0.14E-4   & 2.80  &0.43E-5  &2.80\\
$ $       &$256$       & 0.91E-4  & 2.88    & 0.29E-4  & 2.88  & 0.91E-5   & 2.88  & 0.29E-5   & 2.88  &0.91E-6  &2.88\\
$ $       &$512$       & 0.17E-4  & 2.94    & 0.55E-5  & 2.94  & 0.17E-5   & 2.94  & 0.55E-6   & 2.94 &0.17E-6  &2.94\\
$ $       &$1024$      & 0.31E-5  & ---     & 0.97E-6  & ---   & 0.31E-6   & ---   & 0.97E-7   & ---  &0.31E-7  &---\\

$3$       & $32$       & 0.15E-2  & 3.30    & 0.48E-3  & 3.30  & 0.15E-3   & 3.30  & 0.48E-4   & 3.30 
& 0.15E-4   & 3.30\\
$ $       &$64$        & 0.28E-3  & 3.56    & 0.89E-4  & 3.56  & 0.28E-4   & 3.56  & 0.89E-5   & 3.56
& 0.28E-5   & 3.56\\
$ $       &$128$       & 0.41E-4  & 3.74    & 0.13E-4  & 3.74  & 0.41E-5   & 3.74  & 0.13E-5   & 3.74 &0.41E-6  & 3.74\\
$ $       &$256$       & 0.51E-5  & 3.86    & 0.16E-5  & 3.86  & 0.51E-6   & 3.86  & 0.16E-6   & 3.86   &0.51E-7  & 3.86\\
$ $       &$512$       & 0.56E-6  & ---     & 0.18E-6  & ---   & 0.56E-7   & ---   & 0.18E-7   &---  &0.56E-8  &---\\

\cline{1-12}
\end{tabular*}
\label{table:2}
\end{table}

\begin{table}[H] 
\caption{The errors and convergence rates under the balanced norm}\label{table-W-2}
\footnotesize
\begin{tabular*}{\textwidth}{@{\extracolsep{\fill}} cccccccccccc }
\cline{1-12}
            &    &\multicolumn{2}{c}{$\epsilon=10^{-4}$} &\multicolumn{2}{c}{$\epsilon=10^{-6}$}  &\multicolumn{2}{c}{$\epsilon=10^{-8}$}   
&\multicolumn{2}{c}{$\epsilon=10^{-10}$} &\multicolumn{2}{c}{$\epsilon=10^{-12}$}   \\
 
\cline{3-12}
$k$ &$N$&$|||e|||_B$&$r_p$&$|||e|||_B$&$r_p$&$|||e|||_B$&$r_p$&$|||e|||_B$&$r_p$&$|||e|||_B$&$r_p$\\
\cline{1-12}

$1$       & $32$       & 0.25E+0  & 1.56    & 0.25E+0  & 1.56  & 0.25E+0   & 1.56  & 0.25E+0   & 1.56  &0.25E+0  &1.56\\
$ $       &$64$        & 0.11E+0  & 1.72    & 0.11E+0  & 1.72  & 0.11E+0   & 1.72  & 0.11E+0   & 1.72  &0.11E+0  &1.72\\
$ $       &$128$       & 0.44E-1  & 1.83    & 0.44E-1  & 1.83  & 0.44E-1   & 1.83  & 0.44E-1   & 1.83  &0.44E-1  &1.83\\
$ $       &$256$       & 0.16E-1  & 1.90    & 0.16E-1  & 1.90  & 0.16E-1   & 1.90  & 0.16E-1   & 1.90  &0.16E-1  &1.90\\
$ $       &$512$       & 0.53E-2  & 1.95    & 0.53E-2  & 1.95  & 0.53E-2   & 1.95  & 0.53E-2   & 1.95  &0.53E-2  &1.95\\
$ $       &$1024$      & 0.17E-2  & ---    & 0.17E-2  & ---  & 0.17E-2   & ---  & 0.17E-2   & ---  &0.17E-2  &---\\

$2$       & $32$       & 0.61E-1  & 2.42    & 0.61E-1  & 2.42  & 0.61E-1   & 2.42  & 0.61E-1   & 2.42 &0.61E-1  &2.42\\
$ $       &$64$        & 0.18E-1  & 2.64    & 0.18E-1  & 2.64  & 0.18E-1   & 2.64  & 0.18E-1   & 2.64  &0.18E-1  &2.64\\
$ $       &$128$       & 0.43E-2  & 2.79    & 0.43E-2  & 2.79  & 0.43E-2   & 2.79  & 0.43E-2   & 2.79  &0.43E-2  &2.79\\
$ $       &$256$       & 0.90E-3  & 2.88    & 0.90E-3  & 2.88  & 0.90E-3   & 2.88  & 0.90E-3   & 2.88  &0.90E-3  &2.88\\
$ $       &$512$       & 0.17E-3  & 2.92    & 0.17E-3  & 2.92  & 0.17E-3   & 2.92  & 0.17E-3   & 2.92 &0.17E-3  &2.92\\
$ $       &$1024$      & 0.31E-4  & ---    & 0.31E-4  & ---  & 0.31E-4   & ---  & 0.31E-4   & ---  &0.31E-4  &---\\

$3$       & $32$       & 0.15E-1  & 3.27    & 0.15E-1  & 3.27  & 0.15E-1   & 3.27  & 0.15E-1   & 3.27 
& 0.15E-1   &3.27\\
$ $       &$64$        & 0.28E-2  & 3.55    & 0.28E-2  & 3.55  & 0.28E-2   & 3.55  & 0.28E-2   & 3.55
& 0.28E-2   & 3.55\\
$ $       &$128$       & 0.41E-3  & 3.73    & 0.41E-3  & 3.73  & 0.41E-3   & 3.73  & 0.41E-3   & 3.73 &0.41E-3  & 3.73\\
$ $       &$256$       & 0.51E-4  & 3.84    & 0.51E-4  & 3.84  & 0.51E-4   & 3.84  & 0.51E-4   & 3.84   &0.51E-4  & 3.84\\
$ $       &$512$       & 0.55E-5  & ---    & 0.55E-5  & ---  & 0.55E-5   & ---  & 0.55E-5   &---  &0.55E-5  &---\\

\cline{1-12}
\end{tabular*}
\label{table:2}
\end{table}

\section{Statements and Declarations}
\subsection{Funding}
This research is supported by National Natural Science Foundation of China (11771257) and Shandong Provincial Natural Science Foundation, China (ZR2023YQ002).
\subsection{Data availability statement}
The authors confirm that the data supporting the findings of this study are available within the article and its supplementary materials.
\subsection{Conflict of interests}
The authors declare that they have no conflict of interest.

%
%
%

\begin{thebibliography}{10}
\providecommand{\url}[1]{{#1}}
\providecommand{\urlprefix}{URL }
\expandafter\ifx\csname urlstyle\endcsname\relax
  \providecommand{\doi}[1]{DOI~\discretionary{}{}{}#1}\else
  \providecommand{\doi}{DOI~\discretionary{}{}{}\begingroup
  \urlstyle{rm}\Url}\fi
  
 %
\bibitem{R2012-finite}
R. Lin and M. Stynes.
\newblock { A balanced finite element method for singularly perturbed reaction-diffusion problems}.
\newblock { SIAM J. Numer. Anal.} 50(5), 2729–2743 (2012).
 %
\bibitem{R2015-convergence}
H.G. Roos and M. Schopf.
\newblock { Convergence and stability in balanced norms of finite element methods on
{S}hishkin meshes for reaction-diffusion problems}.
\newblock { ZAMMZ. Angew. Math.Mech.} 95(6), 551–565 (2015).
%
\bibitem{Xie1Zhu2Zho3:2013-U}
Z.~Xie, P.~Zhu, and S.~Zhou.
\newblock Uniform convergence of a coupled method for convection-diffusion
  problems in 2{D} {S}hishkin mesh.
\newblock {\em Int. J. Numer. Anal. Model.}, 10(4):845--859, 2013.

\bibitem{Zha1Liu2:2017-S}
J.~Zhang and X.~Liu.
\newblock Supercloseness of the {SDFEM} on {S}hishkin triangular meshes for
  problems with exponential layers.
\newblock {\em Adv. Comput. Math.}, 43(4):759--775, 2017.

  \bibitem{Ma1Zha2:2023-S}
X.~Ma and J.~Zhang.
\newblock Supercloseness analysis of the nonsymmetric interior penalty
  {G}alerkin method for a singularly perturbed problem on {B}akhvalov-type
  mesh.
\newblock {\em Appl. Math. Lett.}, 144:108701, 7, 2023.
  
%
%

\bibitem{Di1Ern2:2012-M}
D.~Di~Pietro and A.~Ern.
\newblock {\em Mathematical Aspects of Discontinuous {G}alerkin Methods},
  volume~69 of {\em Math\'{e}matiques} \& {\em Applications (Berlin) [Mathematics} \&{\em
  Applications]}.
\newblock Springer, Heidelberg, 2012.

%
\bibitem{c2022-balanced}
Y. Cheng, L. Yan, and Y.J. Mei.
\newblock Balanced norm error estimate of the local discontinuous {G}alerkin
  method on layer-adapted meshes for reaction-diffusion problems.
\newblock { Numer. Algorithms}. pages 1--30, 2022.
%

%
\bibitem{C2012-Discontinuous}
B. Cockburn, G.E. Karniadakis, and C.W. Shu.
\newblock { {\em Discontinuous {G}alerkin {M}ethods. Theory, Computation and
  Applications}}.
\newblock  Lect. Notes Comput. Sci. Eng. 11, Springer, Berlin, 2000.
%
\bibitem{C1998-local}
B. Cockburn and C.W. Shu.
\newblock The local discontinuous {G}alerkin method for time-dependent
  convection-diffusion systems.
\newblock {  SIAM J. Numer. Anal. 35 (1998), pp. 2440–2463}.

\bibitem{Ngu1Per2:2009-motified}
N.~C. Nguyen, J.~Peraire, and B.~Cockburn.
\newblock An implicit high-order hybridizable discontinuous {G}alerkin method
  for linear convection-diffusion equations.
\newblock {\em J. Comput. Phys.}, 228(9):3232--3254, 2009.
%
%
%
\bibitem{m2021-local}
Y.J. Mei, Y. Cheng, S. Wang, and Z. Xu.
\newblock Local discontinuous {G}alerkin method on layer-adapted meshes for
  singularly perturbed reaction-diffusion problems in two dimensions.
\newblock { arXiv preprint arXiv:2103.01083}, 2021.

\bibitem{m2021-convergence}
Y.J. Mei, S. Wang, Z. Xu, C. Song, and Y. Cheng.
\newblock Convergence analysis of the {LDG} method for singularly perturbed
  reaction-diffusion problems.
\newblock { Symmetry}. 13(12):2291, 2021.

\bibitem{miller1996-fitted}
J.J.H. Miller, E. O'Riordan, and G.I. Shishkin.
\newblock {{\em Fitted {N}umerical {M}ethods for {S}ingular {P}erturbation {P}roblems}:
  error estimates in the maximum norm for linear problems in one and two
  dimensions}.
\newblock Sci. World. J. 1996.
%
%
%


%
  

%

\bibitem{c2021-local}
Y. Cheng.
\newblock On the local discontinuous {G}alerkin method for singularly perturbed
  problem with two parameters.
\newblock {\em J. Comput. Appl. Math.}, 392:113485,
  2021.



\bibitem{B1969-Towards}
N.S. Bakhvalov.
\newblock Towards optimization of methods for solving boundary value problems
  in the presence of boundary layers. Zh. Vychisl. Mat. Mat.
Fiz. 9:841--859, 1969.

\bibitem{Styn1Styn2:2018-Convection-diffusion}
M.~Stynes and D.~Stynes.
\newblock { Convection-diffusion problems}, volume 196 of { Graduate
  Studies in Mathematics}.
\newblock American Mathematical Society, Providence, RI; Atlantic Association
  for Research in the Mathematical Sciences (AARMS), Halifax, NS, 2018.

\bibitem{Cl1Gr2:2005-motified}
C.~Clavero, J.~L. Gracia, and E.~O'Riordan.
\newblock A parameter robust numerical method for a two dimensional
  reaction-diffusion problem.
\newblock {\em Math. Comp.}, 74(252):1743--1758, 2005.




\bibitem{R2008-Robust}
H. Roos, M. Stynes and L. Tobiska.
\newblock {\em Robust Numerical Methods for Singularly Perturbed Differential
Equations.}
\newblock { Springer Series in Computational Mathematics, 2nd edn. Springer-Verlag, Berlin (2008)}.









\bibitem{z2013-point}
H. Zhu and Z. Zhang.
\newblock Pointwise error estimates for the {LDG} method applied to 1-d
  singularly perturbed reaction-diffusion problems.
\newblock { Comput. Meth. Appl. Mat}. 13(1):79--94,
  2013.

%

\bibitem{A1999-An}
X. Meng, C.W. Shu and B. Wu.
\newblock {O}ptimal error estimates for discontinuous {G}alerkin methods based on upwind-biased fluxes for linear hyperbolic equations.
\newblock { Math. Comp}. 85,
no. 299, 1225–1261, 2016.
%


\end{thebibliography}


\end{document}